\documentclass[preprint,3p,times]{elsarticle}

\usepackage{amssymb}
\usepackage{amsmath}
\usepackage{amsthm}
\usepackage{color}
\usepackage{graphicx,subfig} 
\usepackage{booktabs}   
\usepackage{threeparttable} 
\usepackage{enumerate}
\usepackage[all,pdf]{xy}
\usepackage{lmodern,amsmath}
\usepackage{amsthm}
\usepackage{tikz-cd}
\usepackage{mathrsfs}
\usepackage{lipsum}
\usepackage{enumitem,verbatim}
\usepackage{tikz}
\usepackage{appendix}
\usepackage[colorlinks=true]{hyperref}

\theoremstyle{plain}
  \newtheorem{thm}{Theorem}[section]
  \newtheorem{lem}[thm]{Lemma}
  \newtheorem{prop}[thm]{Proposition}
  \newtheorem{cor}[thm]{Corollary}
\theoremstyle{definition}
  \newtheorem{definition}[thm]{Definition}
  \newtheorem{exmp}[thm]{Example}
  \newtheorem{remark}[thm]{Remark}

\DeclareMathAlphabet{\mathcal}{OMS}{cmsy}{m}{n}

\makeatletter
\def\ps@pprintTitle{%
 \let\@oddhead\@empty
 \let\@evenhead\@empty
 \def\@oddfoot{\centerline{\thepage}}%
 \let\@evenfoot\@oddfoot}
\makeatother

\renewcommand{\phi}{\varphi}

\newcommand{\Luk}{\mathrm{L}}

\numberwithin{equation}{section}

\allowdisplaybreaks

\begin{document}

\begin{frontmatter}



\title{Completeness of the Fuzzy Order on Fuzzy Numbers
}



\author{Mingchun Xie\corref{cor}}
\ead{xiemc1011@163.com}

\cortext[cor]{Corresponding author.}
\address{School of Mathematics, Sichuan University, Chengdu 610064, China}

\begin{abstract}
This paper investigates the completeness properties  of the fuzzy order on the fuzzy number space $\mathbb{E}^1$ within the framework of $[0,1]$-enriched categories. \(S\)-convergence under the three fundamental continuous t-norms is characterized, and inclusion relations among the induced open ball topologies are established. It is shown that $(\mathbb{E}^1,P)$ is Cauchy complete if and only if the t-norm is not isomorphic to the {\L}ukasiewicz t-norm, whereas $(\mathbb{E}^1,P)$ fails to be Smyth complete for every continuous t-norm. Consequently, the symmetrization $(\mathbb{E}^1,S)$ is Smyth complete if and only if the underlying continuous t-norm is not isomorphic to the {\L}ukasiewicz t-norm.

\end{abstract}

\begin{keyword}
Fuzzy numbers\sep [0,1]-enriched category\sep Cauchy complete\sep Smyth complete\sep Continuous t-norm  

\end{keyword}

\end{frontmatter}

\section{Introduction}
Fuzzy set theory, introduced by Zadeh \cite{ZADEH1965338}, provides a fundamental mathematical framework for modeling vagueness and uncertainty. Among its central objects of study, fuzzy numbers generalize real numbers and play a role in fuzzy mathematics analogous to that of real numbers in classical analysis.   They offer a natural model for imprecise quantitative information. The problem of comparing and ordering fuzzy numbers is therefore foundational: it directly affects the reliability of applications such as fuzzy optimization and multi-attribute decision-making.
 
 Existing approaches to comparing fuzzy numbers, including defuzzification \cite{BORTOLAN19851,CHU2002111,KaufmannGupta1985}, reference-set construction \cite{CHEN1985113,jain1976decision,jain1977procedure}, and fuzzy relation approaches \cite{wang1997comparative,wang1996classification}, face a fundamental trade-off: they either sacrifice the strict mathematical structure of an order to achieve a total order, or they employ fuzzy relations that lack the support of complete order axioms. What is still missing is a fuzzy analogue of the classical partial order: a construct that satisfies the fuzzy analogues of reflexivity and transitivity and that is rich enough to support a substantial theory. Although the abstract notion of a fuzzy order has been defined, the construction of a concrete and natural instance for fuzzy numbers remained an open problem until the recent work of Kou and Xie \cite{kouhui}.

Recently, Kou and Xie introduced a fuzzy order \(P\) on the space of fuzzy numbers, based on the theory of \([0,1]\)-enriched categories. This relation naturally extends both the usual order on the real line and the usual order on interval numbers. Moreover, it allows the degree of comparison to be modulated by the choice of a continuous t-norm and reflects both the distribution and the structural information of fuzzy numbers. They proved that \((\mathbb{E}^1_{[-M,M]},P)\) is Yoneda complete for uniformly bounded fuzzy numbers and is complete if and only if the continuous t-norm is the G\"odel t-norm, while leaving the various completeness properties of the full space \((\mathbb{E}^1,P)\) open.

The aim of this paper is to resolve these open questions. The main contributions are as follows.
\begin{enumerate}[label={(\arabic*)}]
\item We show that, under the {\L}ukasiewicz t-norm, \(S\)-convergence coincides with convergence in the supremum metric
\(
d_{\infty}(u,v)=\|u-v\|_{\infty},
\)
and we characterize \(S\)-convergence under the product and G\"odel (minimum) t-norms; consequently, the corresponding open ball topologies satisfy
\(
\tau_{\Luk}\subseteq\tau_{*}\subseteq\tau_{\wedge}.
\)
 \item We establish a complete characterization of the completeness properties of the fuzzy order. For every continuous \(t\)-norm, \((\mathbb{E}^1,P)\) is not Smyth complete, while it is Cauchy complete if and only if the \(t\)-norm is not isomorphic to the {\L}ukasiewicz \(t\)-norm. Consequently, the symmetrization \((\mathbb{E}^1,S)\) is Smyth complete if and only if the underlying \(t\)-norm is not isomorphic to the {\L}ukasiewicz \(t\)-norm. Thus, the {\L}ukasiewicz \(t\)-norm is the unique obstruction to Cauchy completeness of \((\mathbb{E}^1,P)\) and Smyth completeness of \((\mathbb{E}^1,S)\), whereas the failure of Smyth completeness of \((\mathbb{E}^1,P)\) is universal.

\end{enumerate}

The rest of this paper is organized as follows. Section~2 recalls the necessary preliminaries on fuzzy numbers, continuous t-norms, and \([0,1]\)-enriched categories. Section~3 characterizes \(S\)-convergence under the three fundamental continuous t-norms and establishes the inclusion relations among the induced open ball topologies. Section~4 investigates the completeness of the fuzzy order. Section~5 concludes.

\section{Preliminaries} 
  This section collects the basic notions required throughout the paper: first,
 the definition and elementary properties of fuzzy numbers; then, triangular norms(t-norms) and their residuated implications—the logical operators underlying the fuzzy order on \(\mathbb{E}^1\) recently introduced by Kou and Xie; finally, basic concepts from the theory of \([0,1]\)-enriched categories.
\begin{definition}
The fuzzy number space \( \mathbb{E}^1 \) is the set of all functions \( u :\mathbb{R}\to [0,1]\) satisfying the following properties:
\begin{enumerate}[label={\textup{(\arabic*)}}]
  \item Normality: There exists \( x_0 \in \mathbb{R} \) with \( u(x_0) = 1 \).
  \item Convexity: \( u(\lambda x + (1-\lambda)y) \geq \min \{u(x), u(y)\} \) for all \( x, y \in \mathbb{R}, \lambda \in [0, 1] \).
  \item Upper-semicontinuity: \( u(x) \) is upper-semicontinuous, i.e., for all  \(x_0\in \mathbb{R}\), \(
\limsup_{x \to x_0} u(x) \le u(x_0).\) 
  \item Compact support: \([u]_0=\operatorname{cl}_{\mathbb{R}} \left\{x \in \mathbb{R} : u(x) > 0\right\} \) is compact in \( \mathbb{R} \).
\end{enumerate}
For \(M>0\), let
\[
\mathbb{E}^1_{[-M,M]}
 :=\{u\in\mathbb{E}^1:[u]_0\subseteq[-M,M]\}.
\]
\end{definition} 
Given a fuzzy subset \( u \) on \( \mathbb{R} \), the \( \alpha \)-level set of \( u \) is defined by \([u]_ \alpha  = \{x \in \mathbb{R} : u(x) \geq  \alpha \} \) for \(  \alpha  \in (0, 1] \). 
\begin{definition}
    Let $u\in \mathbb{E}^1$, and define its left part \( u^\ell \) and right part \( u^r \) as functions  \( \mathbb{R} \to [0,1] \) by:
\[
u^\ell(x) =
\begin{cases}
	0, & x < u_0^-; \\
	u(x), & x \in [u_0^-, u_1^-]; \\
	1, & x > u_1^-.
\end{cases}
\qquad
u^r(x) =
\begin{cases}
	1, & x < u_1^-; \\
	u(x), & x \in [u_1^-, u_0^+]; \\
	0, & x > u_0^+.
\end{cases}
\]
Here \(u^-_0\) and \(u^+_0\) denote the left and right endpoints of the \(0\)-level set, so that \([u]_0=[u^-_0,u^+_0]\); likewise, \(u^-_1\) and \(u^+_1\) denote the left and right endpoints of the \(1\)-level set \([u]_1=[u^-_1,u^+_1]\).
\end{definition}
\begin{prop}[\cite{kouhui}, Proposition 3.2]\label{represent of fuzzy number}
    Let \( u \in \mathbb{E}^1 \) and \( u = (u^\ell, u^r) \). Then:
\begin{enumerate}[label={\textup{(\arabic*)}}]
    \item \( u^\ell,  u^r\)  are both upper-semicontinuous functions. \( u^\ell\) is a non-decreasing function on \( \mathbb{R} \) and  \(u^r\) is a non-increasing function on \( \mathbb{R} \);
    \item There exist \(x_0,x_1\in \mathbb{R}\) such that $u^\ell(x_0)=u^r(x_1)=0$;
    \item  For every \( x\in \mathbb{R} \), \( u^\ell(x)\vee u^r(x)=1\).
\end{enumerate}
Conversely, if a pair of functions $(f,g)$ satisfies \((1)-(3)\), then there is a unique $u\in\mathbb{E}^1$ such that $u=f\wedge g$ pointwise, and then $u^\ell=f$, $u^r=g$.

\end{prop}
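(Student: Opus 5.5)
Since $u$ is a fuzzy number, $[u]_\alpha$ is a nonempty compact interval $[u^-_\alpha,u^+_\alpha]$ for each $\alpha\in(0,1]$: normality, convexity, upper semicontinuity and compact support give nonempty, convex, closed and bounded respectively. We also have $[u]_0=[u^-_0,u^+_0]$. I will first confirm that $u^\ell$ and $u^r$ are well defined and agree with $u$ on the middle pieces. The definition of $u^r$ uses $u^-_1$ as its left breakpoint; I will read this as $u^+_1$ for the right part. With either reading, on $[u^-_1,u^+_1]$ we have $u=1$, so the definitions are consistent.

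\textbf{Direct part.} For (1), monotonicity of $u^\ell$ splits into three regimes. It is $0$ below $u^-_0$ and $1$ above $u^-_1$. On $[u^-_0,u^-_1]$ it is non-decreasing by convexity: for $x<y\le u^-_1$, write $y$ as a convex combination of $x$ and $u^-_1$, so $u(y)\ge \min\{u(x),u(u^-_1)\}=u(x)$. Upper semicontinuity of $u^\ell$ is only in question at the breakpoints $u^-_0$ and $u^-_1$; elsewhere it is locally $u$ or a constant. At $u^-_1$ the value is $1$, so there is nothing to check. At $u^-_0$ the left limit is $0$, and the right side is governed by the upper semicontinuity of $u$. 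The argument for $u^r$ is symmetric. For (2), any $x_0<u^-_0$ and $x_1>u^+_0$ work. For (3), if $x\le u^-_1$ then $u^r(x)=1$ (for $x<u^-_1$ by definition, and at $x=u^-_1$ since $u=1$ there). If $x>u^-_1$ then $u^\ell(x)=1$. Finally, checking $u=u^\ell\wedge u^r$ pointwise is a routine case split on the position of $x$ relative to $u^-_0,u^-_1,u^+_1,u^+_0$.

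\textbf{Converse.} Let $(f,g)$ satisfy (1)--(3) and set $u=f\wedge g$.
\begin{itemize}
\item \emph{Upper semicontinuity.} A minimum of two upper semicontinuous functions is upper semicontinuous.
\item \emph{Convexity.} The level sets $[u]_\alpha=[f]_\alpha\cap[g]_\alpha$ are intersections of an up-ray and a down-ray, by monotonicity, and hence are intervals.
\item \emph{Compact support.} By (2) and monotonicity, $f=0$ on $(-\infty,x_0]$ and $g=0$ on $[x_1,\infty)$, so the support lies in $[x_0,x_1]$.
\item \emph{Normality.} This is the main obstacle. Let $a=\inf\{x:f(x)=1\}$; the set is nonempty since otherwise $g\equiv1$ by (3), contradicting (2). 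Upper semicontinuity plus monotonicity give $f(a)=1$. For $x<a$ we have $f(x)<1$, so $g(x)=1$ by (3). Upper semicontinuity of $g$ then gives $g(a)\ge\limsup_{x\uparrow a}g(x)=1$, so $u(a)=1$. The same argument shows $u$ is not constant zero.
\end{itemize}

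\textbf{Identification of the parts and uniqueness.} It remains to show that $u^\ell=f$ and $u^r=g$, which in turn gives uniqueness. Uniqueness of $u$ itself is trivial once $u=f\wedge g$ is required.
\begin{itemize}
\item On $[u^-_0,u^-_1]$, (3) forces $g=1$ there, the argument being as in the normality step. Hence $u=f$ there.
\item Above $u^-_1$ we need $f=1$, which follows from monotonicity since $f(u^-_1)=1$.
\item Below $u^-_0$ we need $f=0$. Take $x<u^-_0$. Since $x$ lies left of the support, $u(x)=0$, and $g(x)=1$ by the same argument as above, so $f(x)=0$.
\end{itemize}
The symmetric argument gives $u^r=g$.
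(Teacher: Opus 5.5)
Your proof is correct. The paper gives no proof of this proposition; it simply imports it from Kou and Xie (their Proposition 3.2). So there is no in-paper argument to compare against, and your direct verification is the natural route.

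In the forward direction, the pieces you use are exactly what is needed:
\begin{itemize}
\item the convexity argument for monotonicity on $[u^-_0,u^-_1]$, using $u(u^-_1)=1$;
\item the breakpoint check for upper semicontinuity;
\item the observation that the paper's use of $u^-_1$ as the left breakpoint of $u^r$ is harmless, because $u\equiv 1$ on $[u^-_1,u^+_1]$.
\end{itemize}

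In the converse, you correctly isolate the key step. Set $a=\inf\{x: f(x)=1\}$. Condition (3) forces $g=1$ on $(-\infty,a)$, and upper semicontinuity of $g$ extends this to $a$.

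Two small points are left implicit and would be worth one line each:
\begin{itemize}
\item $a$ is finite, since $f(x_0)=0$ and $f$ is non-decreasing.
\item $u^-_1=a$. This holds because $u(a)=1$ and $u\le f<1$ to the left of $a$. Your identification of $u^\ell$ with $f$, both on $[u^-_0,u^-_1]$ and below $u^-_0$, relies on $u^-_1\le a$.
\end{itemize}
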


\begin{definition}\cite{Alsina2006}
    A \( t \)-norm is a function of two variables \( \& : [0,1] \times [0,1] \to [0,1] \) that satisfies the following properties:
\begin{enumerate}[label={\textup{(\arabic*)}}]
    \item Commutativity: for every \( x, y \in [0,1] \) one has \( x \&y = y\&x \),
    \item Monotonicity: for every \( x, y, z, w \in [0,1] \) if \( x \leq z \), \( y \leq w \) then  $x \& y \leq z\& w$,
    \item Associativity: for every \( x, y, z \in [0,1] \) one has \( x\& (y\& z) = (x\& y)\& z \),
    \item Identity element: for every \( x \in [0,1] \) one has \( x\& 1 = x \).
\end{enumerate}
\end{definition}

Given a left-continuous t-norm \(\&\), the symbol \(\rightarrow\) denotes its residuated implication,
\[
x\rightarrow y = \sup\{u \in [0,1] \mid x\mathbin{\&} u \leq y\},
\]
and \(\leftrightarrow\) denotes the corresponding biresiduum, defined by
\[
x\leftrightarrow y=(x\rightarrow y)\wedge (y\rightarrow x).
\]
We briefly review the  important properties of the residuated implication and bi-implication, which can be found in \cite{HL2006}.
\begin{prop}\label{prop:residuum-properties}
    Suppose that \( \& \) is a left-continuous t-norm and \( \rightarrow \) and \( \leftrightarrow \) denote the residuated implication and bi-implication with respect to \( \& \). Then
\begin{enumerate}[label={\textup{(\arabic*)}}]
    \item \( x \leq y \) if and only if \( x\rightarrow y = 1 \).
    \item \( x\& y \leq z \) if and only if \( x \leq y\rightarrow z \).
    \item \( (x\rightarrow y)\& (y\rightarrow z) \leq x\rightarrow z \).
    \item \( (x\leftrightarrow y)\& (y\leftrightarrow z) \leq x\leftrightarrow z \).
    \item \( 1\rightarrow y = y \).
    \item \( x\&( x \rightarrow y) \leq y \).
    \item \(( \bigvee_{i} x_i)\rightarrow y=\bigwedge_i(x_i\rightarrow y)\).
    \item \( x\rightarrow(\bigwedge_i y_i) =\bigwedge_i (x\rightarrow y_i)\).
\end{enumerate}
\end{prop}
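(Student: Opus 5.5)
The eight items follow from a single fact, the residuation adjunction (2), so I will prove (2) first and derive the others from it. Left-continuity of \(\&\) in each argument (by commutativity, continuity in one argument is enough) means that \(x\&(\bigvee_j u_j)=\bigvee_j (x\& u_j)\). Put \(A=\{u: x\& u\le y\}\). Then \(x\&(\sup A)=\sup_{u\in A} x\& u\le y\). Hence the supremum defining \(x\rightarrow y\) is attained, and \(x\&(x\rightarrow y)\le y\), which is (6). For (2): if \(x\& y\le z\), then by commutativity \(y\& x\le z\), so \(x\) lies in the defining set of \(y\rightarrow z\) and \(x\le y\rightarrow z\). Conversely, if \(x\le y\rightarrow z\), then monotonicity and (6) give \(x\& y\le (y\rightarrow z)\& y\le z\).

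The remaining items are then quick.

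For (1): by (2), \(1\le x\rightarrow y\) holds iff \(1\& x\le y\), i.e. iff \(x\le y\), using the identity law.

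For (5): by (2), \(u\le 1\rightarrow y\) holds iff \(u\& 1\le y\), i.e. iff \(u\le y\). Since this holds for every \(u\), we get \(1\rightarrow y=y\).

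For (3): by (2) it suffices to show \(x\&(x\rightarrow y)\&(y\rightarrow z)\le z\). Associativity and two applications of (6) give \(x\&(x\rightarrow y)\&(y\rightarrow z)\le y\&(y\rightarrow z)\le z\).

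For (4): monotonicity gives \((x\leftrightarrow y)\&(y\leftrightarrow z)\le (x\rightarrow y)\&(y\rightarrow z)\le x\rightarrow z\), the last step by (3). Symmetrically (using commutativity), the same product is \(\le (z\rightarrow y)\&(y\rightarrow x)\le z\rightarrow x\). So it is below the meet of the two, which is \(x\leftrightarrow z\).

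For (7) and (8) I compare lower bounds via (2).

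For (7): \(u\le (\bigvee_i x_i)\rightarrow y\) iff \(u\&\bigvee_i x_i\le y\). By left-continuity this equals \(\bigvee_i (u\& x_i)\le y\), which holds iff \(u\& x_i\le y\) for all \(i\). By (2) that means \(u\le x_i\rightarrow y\) for all \(i\), i.e. \(u\le\bigwedge_i(x_i\rightarrow y)\). The two sides therefore have the same lower bounds in \([0,1]\) and are equal.

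For (8): \(u\le x\rightarrow\bigwedge_i y_i\) iff \(u\& x\le\bigwedge_i y_i\), iff \(u\& x\le y_i\) for all \(i\), iff \(u\le\bigwedge_i(x\rightarrow y_i)\). This needs no continuity at all.

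The one point needing care is that left-continuity is exactly what makes \(\&\) preserve arbitrary joins in each argument. For a directed (increasing) family, monotonicity and left-continuity give \(x\&\sup_j u_j=\sup_j x\& u_j\). For an arbitrary family, I would take an increasing sequence of members converging to the supremum (or note that the supremum is attained by a member), which again yields the equality. This join-preservation step is what makes the supremum in the definition of \(\rightarrow\) attained and what drives (7). Everything else is formal manipulation of the adjunction. Empty families in (7) and (8) are handled by the conventions \(\bigvee\emptyset=0\) and \(\bigwedge\emptyset=1\), using \(0\rightarrow y=1\) and \(x\rightarrow 1=1\).
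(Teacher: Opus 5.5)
Your argument is correct and complete. Reducing everything to the adjunction (2) is the standard derivation, and the paper itself gives no proof, simply citing the literature for these facts. In your version, left-continuity enters only through join-preservation of $\&$: this yields attainment of the supremum defining $x\rightarrow y$ (hence (6) and (2)) and also drives (7). One small quibble: (8) does hold for any t-norm, but your derivation passes through the half of (2) that rests on (6), so as written it is not continuity-free; a continuity-free proof would work directly with the down-sets $\{v : x\mathbin{\&}v\le y_i\}$.
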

Kou and Xie introduced a natural fuzzy order on \(\mathbb{E}^1\)
based on \([0,1]\)-enriched category theory. We recall the basic
notions and terminology needed below; see
\cite{Zhang2024IntroductoryNO} for further details.
\begin{definition}
	A \([0,1]\)-enriched category is a pair $(X, \alpha)$, where $X$ is a set and $\alpha: X \times X \to [0,1]$ is a function such that
	\begin{enumerate}[label={\textup{(\arabic*)}}]
		\item $\alpha(x, x) = 1$ for all $x \in X$;
		\item  $\alpha(y, z) \& \alpha(x, y) \leq \alpha(x, z)$ for all $x, y, z \in X$.
	\end{enumerate}
\end{definition}
For notational convenience, when dealing with a \([0,1]\)-enriched category \((X,\alpha)\), the symbol \(\alpha\) is often omitted and we write $X(x,y)$ for $\alpha(x,y)$.
Two elements $x$ and $y$ of a \([0,1]\)-enriched category $X$ are isomorphic if $X(x,y) = 1 = X(y,x)$. A \([0,1]\)-enriched category $X$ is separated if its isomorphic elements are identical.

Let $X$ be a \([0,1]\)-enriched category. For each $x \in X$ and $r < 1$, the set
\[
B(x, r) := \{y \in X \mid X(x, y) > r\}
\]
is called the open ball of $X$ with center $x$ and radius $r$. The collection
$\{B(x, r) \mid x \in X, r < 1\}$ is a base for a topology on $X$, the resulting topology \cite{J.Jacas1995}  is called the open ball topology of $X$.

For each \([0,1]\)-enriched category $(X, \alpha)$, the symmetrization of $(X, \alpha)$ refers to the symmetric  \([0,1]\)-enriched category $(X, S(\alpha))$, where
\[
S(\alpha)(x, y) = \min\{\alpha(x, y), \alpha(y, x)\}.
\]
It is clear that the open ball topology of the symmetrization of $X$ is the least common refinement of the open ball topology of $X$ and that of $X^{\text{op}}$.

\begin{prop}[\cite{Zhang2024IntroductoryNO}, Proposition 8.9]
     A net  \(\{x_i\}_{i\in D}\) of a \([0,1]\)-enriched category \(X\) converges to \(x\)  with respect to  the open ball topology if and only if \[ \sup_{i\in D}\inf_{j\ge i} X(x,x_j)=1.\]
\end{prop}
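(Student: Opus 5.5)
Since the open ball topology is generated by the balls $B(x,r)=\{y: X(x,y)>r\}$ with $r<1$, and a set $U$ is open exactly when every point of $U$ lies in some ball contained in $U$, I will prove the two directions of the net characterization separately. The one auxiliary fact I need is that the balls centered at a fixed point $x$ form a neighborhood base at $x$. That is, for every basic ball $B(z,s)$ containing $x$ I want some $r<1$ with $B(x,r)\subseteq B(z,s)$. Given $X(z,x)=t>s$, I would use left-continuity of $\&$: since $t\,\&\,1=t>s$, there is $r<1$ with $t\,\&\,r'>s$ for all $r'>r$. Then for $y\in B(x,r)$, transitivity gives
\[
X(z,y)\ \ge\ X(x,y)\,\&\,X(z,x) \ >\ s .
\]
In this step I must make sure that strictness survives. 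I will choose $r$ so that $t\,\&\,r>s$, which is possible because $\sup_{r<1}(t\,\&\,r)=t$ by left-continuity. Monotonicity then gives $t\,\&\,X(x,y)\ge t\,\&\,r>s$. This also confirms that the balls form a base, in line with the cited fact.

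For the forward direction, assume $x_i\to x$ and fix $r<1$. The ball $B(x,r)$ is an open neighbourhood of $x$, because $X(x,x)=1>r$ and balls are open. Hence there is $i_0$ with $X(x,x_j)>r$ for all $j\ge i_0$, so $\inf_{j\ge i_0}X(x,x_j)\ge r$. Letting $r\to1$ gives $\sup_i\inf_{j\ge i}X(x,x_j)=1$.

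For the converse, assume the supremum equals $1$ and let $U$ be an open set containing $x$. By the neighborhood-base fact there is $r<1$ with $B(x,r)\subseteq U$. Pick $r'$ with $r<r'<1$. Since the supremum is $1$, some $i_0$ satisfies $\inf_{j\ge i_0}X(x,x_j)>r'$. Then $X(x,x_j)>r$ for all $j\ge i_0$, so $x_j\in U$ eventually, and the net converges.

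The main obstacle is the neighborhood-base step, which is the only place where the t-norm structure (left-continuity together with transitivity) enters. The rest is a routine unwinding of the definitions.
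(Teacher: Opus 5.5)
Your proof is correct and follows the standard argument behind this result; the paper gives no proof of its own and cites Zhang's notes. You use transitivity together with left-continuity of $\&$ (via $\sup_{r<1} t\,\&\,r = t$) to show that the balls $B(x,r)$, $r<1$, form a neighbourhood base at $x$, and both directions are then a direct unwinding of net convergence. (The intermediate $r'$ in your converse is harmless but unnecessary: $\sup_i\inf_{j\ge i}X(x,x_j)=1>r$ already gives some $i_0$ with $\inf_{j\ge i_0}X(x,x_j)>r$.)
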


\begin{definition}[\cite{kouhui}, Definition 3.3]
Let \( u, v \in \mathbb{E}^1 \) and let\( \rightarrow \) be the implication of a  left-continuous t-norm \( \& \). Define
\[
P(u, v) = \inf_{x \in \mathbb{R}} \left[ (v^\ell(x) \rightarrow u^\ell(x)) \wedge (u^r(x) \rightarrow v^r(x))\right],\quad
S(u,v)=P(u,v)\wedge P(v,u).
\]
 \((\mathbb{E}^1,P)\) is a \([0,1]\)-enriched category, i.e.\ \(P(u,u)=1\) and \(P(v,w)\& P(u,v)\le P(u,w)\) for all \(u,v,w\in\mathbb{E}^1\).
Hence, \((\mathbb{E}^1,S)\) is the symmetrization of \((\mathbb{E}^1,P)\). 

\end{definition}
\begin{prop}
	 Let $\&$ be a left-continuous t-norm. \((\mathbb{E}^1, S)\) is  a  separated \([0,1]\)-enriched category.
\end{prop}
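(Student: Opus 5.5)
We need to show two things: that \(S\) is a \([0,1]\)-enriched category, and that it is separated. The first part is general. For any \([0,1]\)-enriched category, the symmetrization is again one. Reflexivity gives \(S(u,u)=P(u,u)\wedge P(u,u)=1\). For transitivity we use monotonicity of \(\&\) and the triangle inequality for \(P\) in both directions:
\[
S(v,w)\,\&\,S(u,v)\le P(v,w)\,\&\,P(u,v)\le P(u,w),
\]
\[
S(v,w)\,\&\,S(u,v)=S(u,v)\,\&\,S(v,w)\le P(w,v)\,\&\,P(v,u)\le P(w,u).
\]
Here we used commutativity of \(\&\) and the inequality \(P(v,u)\&P(w,v)\le P(w,u)\). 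Taking the minimum of the two bounds gives \(S(v,w)\&S(u,v)\le S(u,w)\). That \((\mathbb{E}^1,P)\) is a \([0,1]\)-enriched category was recorded in the definition above, so this part is routine.

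For separatedness, suppose \(S(u,v)=1=S(v,u)\). Then \(P(u,v)=1\) and \(P(v,u)=1\). Since each \(P\) is an infimum over \(x\in\mathbb{R}\) of a meet of implications, every term equals \(1\). Thus for all \(x\):
\[
v^\ell(x)\to u^\ell(x)=1,\qquad u^r(x)\to v^r(x)=1,
\]
and symmetrically with \(u\) and \(v\) swapped. By Proposition \ref{prop:residuum-properties}(1), this gives \(v^\ell(x)\le u^\ell(x)\), \(u^\ell(x)\le v^\ell(x)\), and the same two inequalities for the right parts. Hence \(u^\ell=v^\ell\) and \(u^r=v^r\) pointwise.

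The last step is to recover \(u=v\) from equality of the left and right parts. The uniqueness clause of Proposition \ref{represent of fuzzy number} gives \(u=u^\ell\wedge u^r=v^\ell\wedge v^r=v\). If we want to avoid relying on that clause, we can check \(u=u^\ell\wedge u^r\) directly from the definitions, using \(u_1^-\le u_1^+\), monotonicity of \(u\) on either side of its core, and \(u=0\) outside \([u_0^-,u_0^+]\).

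This identity, together with the standing convention on \(u^r\) (whose middle case should read \([u_1^+,u_0^+]\), and otherwise \(u=u^\ell\wedge u^r\) still holds because \(u=1\) on \([u_1^-,u_1^+]\)), is the only point that needs care. Everything else is formal, so I expect no real obstacle.
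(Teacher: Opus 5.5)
Your proof is correct. It differs from the paper's mainly in how transitivity is obtained.

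The paper rewrites $S(u,w)$ and $S(w,v)$ as infima over $x$ of meets of bi-implications of the left and right parts. It then moves $\&$ inside the infimum by monotonicity and applies the pointwise inequality $(a\leftrightarrow b)\&(b\leftrightarrow c)\le a\leftrightarrow c$ from Proposition~\ref{prop:residuum-properties}(4). You instead use only the enriched-category axioms of $P$, together with commutativity and monotonicity of $\&$. In other words, you invoke the general fact that the symmetrization of any $[0,1]$-enriched category is again one.

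What each route buys:
\begin{itemize}
\item Your route is shorter and does not depend on the concrete formula for $P$. It does, however, rely on the transitivity of $P$ imported from Kou and Xie.
\item The paper's computation avoids that dependence, working only with residuum identities. It also already displays $S$ in the pointwise bi-implication form that Lemma~\ref{pointwise representation} later starts from.
\end{itemize}

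For separatedness, which the paper calls straightforward, you supply exactly the intended detail. $S(u,v)=1$ forces $u^\ell=v^\ell$ and $u^r=v^r$ via Proposition~\ref{prop:residuum-properties}(1), and then $u=u^\ell\wedge u^r=v^\ell\wedge v^r=v$.

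One small remark: no correction is needed in the paper's definition of $u^r$. Since $u=1$ on $[u_1^-,u_1^+]$, the intervals $[u_1^-,u_0^+]$ and $[u_1^+,u_0^+]$ in the middle case define the same function, as your own parenthetical observation already shows.
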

\begin{proof}
Reflexivity is  immediate. For transitivity, take \( u, v, w \in \mathbb{E}^1 \). 
Using the properties of the implication, one verifies as follows:
\begin{align*}
    S(u, w) \& S(w, v) &=\inf_{x\in \mathbb{R}} \left[(w^\ell(x) \leftrightarrow u^\ell(x)) \wedge (u^r(x) \leftrightarrow w^r(x)) \right] \& \inf_{x\in \mathbb{R}} \left[ (v^\ell(x) \leftrightarrow w^\ell(x)) \wedge (w^r(x) \leftrightarrow v^r(x))\right] \\
    &\le \inf_{x\in \mathbb{R}} \left\{\left[(w^\ell(x) \leftrightarrow u^\ell(x)) \wedge (u^r(x) \leftrightarrow w^r(x)) \right]\& \left[(v^\ell(x) \leftrightarrow w^\ell(x)) \wedge (w^r(x) \leftrightarrow v^r(x))\right]\right\} \\
    &\le \inf_{x\in \mathbb{R}} (v^\ell(x) \leftrightarrow u^\ell(x)) \wedge (u^r(x) \leftrightarrow v^r(x))=S(u,v).
\end{align*}
It is straightforward to verify that \(S(u,v)=1\) implies \(u=v\). 
\end{proof}

\section{Convergence for the three fundamental continuous t-norms}
It is well known that every continuous t-norm admits a unique representation
as an ordinal sum of continuous Archimedean t-norms, each of which is isomorphic
either to the {\L}ukasiewicz t-norm or to the product t-norm; the G\"odel
(minimum) t-norm corresponds to the degenerate case in which every element is
idempotent~\cite[Theorem~2.4.3]{Alsina2006}. In this section we restrict attention to the three fundamental continuous t-norms.
For each of them we characterize \(S_{\&}\)-convergence on \(\mathbb{E}^1\), that is, convergence in the open
ball topology of the symmetrization. As a consequence, we establish a chain of inclusions
among the induced open ball topologies,
\(\tau_{\Luk}\subseteq \tau_{*}\subseteq \tau_{\wedge}\).
The three fundamental continuous t-norms and their residua are given as follows:
\begin{align*}
& a \& b = \max(a + b - 1, 0); \\
& a \to c =  \min(1,\, 1-a+c).
&& (\text{\(\&=\Luk\), {\L}ukasiewicz}) \\[1em]
& a \& b = \min(a, b); \\
& a \to c =\begin{cases}
    c, &a>c;\\
    1, &otherwise.
\end{cases} 
&& (\text{\(\&=\wedge\), G\"odel}) \\[1em]
& a \& b = a \cdot b; \\
& a \to c = \begin{cases}
    c / a, &a>c;\\
    1,&otherwise.
\end{cases}
&& (\text{\(\&=*\), Product})
\end{align*}

The unit interval carries a $[0,1]$-enriched category structure $([0,1],\alpha_L)$ with $\alpha_L(x,y)=x\to y$.  The symmetrization of \(([0,1], \alpha_L)\) is  denoted by \(([0,1], S)\), where \(S(x,y)=x\leftrightarrow y\). The open ball topology induced by the bi-implication on the unit interval has already been investigated in the literature \cite{M. Krupka2025, Zhang2010}. 
\begin{remark}
    The following are bases of open ball topologies of the three fundamental continuous t-norms.
\begin{align*}
 \sigma_1 &= \{[0,b) \mid b>0\} \cup \{(a,b) \mid a<b\} \cup \{(a,1] \mid a<1\} \quad \text{for the {\L}ukasiewicz $t$-norm}, \\
\sigma_2 &= \sigma_1 \cup \{\{0\}\} \quad \text{for the product $t$-norm}, \\
\sigma_3 &= \{\{a\} \mid a<1\} \cup \{(a,1] \mid a<1\} \quad \text{for the G\"odel $t$-norm}.
\end{align*}
\end{remark}
\begin{definition}
   Let $\&$ be a continuous t-norm. A net $\{u_i\}_{i\in D}$ in $\mathbb{E}^1$ is called $S$-convergent to $u$ if it converges to $u$ with respect to the open ball topology of the symmetrization $S$; that is, if
\[
\sup_{i\in D}\inf_{j\ge i} S(u,u_j)=1.
\]

The open ball topology induced by a continuous t-norm \(\&\) is denoted by \(\tau_{\&}\).
\end{definition}
\begin{definition}
\label{def:convergence}
Let $X$ be a nonempty set and $(Y,d)$ a metric space. 
Let $\{f_i\}_{i\in D}$ be a net of functions $f_i:X\to Y$ and let $f:X\to Y$.
\begin{enumerate}[label=(\roman*)]
    \item \textbf{Pointwise convergence.} 
    The net $\{f_i\}_{i\in D}$ is said to \emph{converge pointwise} to $f$ on $X$ if
    \[
    \forall\,x\in X,\;\forall\,\varepsilon>0,\;\exists\,i_0\in D,
    \;\forall\,i\geq i_0:\quad d\bigl(f_i(x),\,f(x)\bigr)<\varepsilon.
    \]
    \item \textbf{Uniform convergence.}
    The net $\{f_i\}_{i\in D}$ is said to \emph{converge uniformly} to $f$ on $X$ if
    \[
    \forall\,\varepsilon>0,\;\exists\,i_0\in D,\;
    \forall\,x\in X,\;\forall\,i\geq i_0:\quad d\bigl(f_i(x),\,f(x)\bigr)<\varepsilon.
    \]
\end{enumerate}
\end{definition}
\begin{remark}\label{rmk:first-countable}
In the special case $Y=[0,1]$ endowed with the usual metric $d(a,b)=|a-b|$, 
uniform convergence of $\{f_i\}_{i\in D}$ to $f$ is equivalent to 
$\|f_i-f\|_{\infty}\to 0$, where $\|\cdot\|_{\infty}$ denotes the supremum norm
$\|g\|_{\infty}=\sup_{x\in X}|g(x)|$.
\end{remark}

\begin{lem}\label{pointwise representation}
Let $\&$ be a left-continuous t-norm  and let $u,v\in\mathbb{E}^1$. Then
\[
S(u,v)=\inf_{x\in\mathbb{R}}\bigl(u(x)\leftrightarrow v(x)\bigr).
\]
\end{lem}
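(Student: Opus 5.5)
The plan is to reduce both sides to pointwise statements about the left and right parts, using the representation $u=u^\ell\wedge u^r$ from Proposition~\ref{represent of fuzzy number}. Unfolding the definition, $S(u,v)=P(u,v)\wedge P(v,u)$ is an infimum over $x$ of four implications. Regrouping them gives
\[
S(u,v)=\inf_{x\in\mathbb{R}}\bigl[(u^\ell(x)\leftrightarrow v^\ell(x))\wedge(u^r(x)\leftrightarrow v^r(x))\bigr].
\]
So it suffices to prove two inequalities between this expression and $\inf_x(u(x)\leftrightarrow v(x))$.

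\textbf{Step 1 ($S(u,v)\le\inf_x(u(x)\leftrightarrow v(x))$).} I would show the pointwise lattice inequality
\[
(a\leftrightarrow b)\wedge(c\leftrightarrow d)\le (a\wedge c)\leftrightarrow(b\wedge d).
\]
By Proposition~\ref{prop:residuum-properties}(8), $(a\wedge c)\to(b\wedge d)=((a\wedge c)\to b)\wedge((a\wedge c)\to d)$. The implication is antitone in its first argument; this is a consequence of (7). Hence $(a\wedge c)\to b\ge a\to b$ and $(a\wedge c)\to d\ge c\to d$. The reverse implication is handled symmetrically. Applying this with $a=u^\ell(x)$, $c=u^r(x)$, $b=v^\ell(x)$, $d=v^r(x)$ and using $u(x)=u^\ell(x)\wedge u^r(x)$ gives the first inequality.

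\textbf{Step 2 (the converse).} The key observation is the sup-representation
\[
u^\ell(x)=\sup_{y\le x}u(y),\qquad u^r(x)=\sup_{y\ge x}u(y).
\]
For $u^\ell$ I would verify this by cases:
\begin{itemize}
\item for $x<u_0^-$ both sides are $0$;
\item on $[u_0^-,u_1^-]$ the function $u$ is nondecreasing by convexity and normality, so the supremum equals $u(x)$;
\item for $x>u_1^-$ the supremum reaches $u(u_1^-)=1$.
\end{itemize}
The argument for $u^r$ is symmetric, with $u_1^+$ in place of $u_1^-$. Then, by Proposition~\ref{prop:residuum-properties}(7) and monotonicity of $\to$ in its second argument,
\[
u^\ell(x)\to v^\ell(x)=\inf_{y\le x}\Bigl(u(y)\to\sup_{z\le x}v(z)\Bigr)\ge\inf_{y\le x}\bigl(u(y)\to v(y)\bigr)\ge\inf_{y}\bigl(u(y)\leftrightarrow v(y)\bigr).
\]
The same bound holds with $u,v$ swapped, and likewise for the right parts. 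Taking the infimum over $x$ gives $S(u,v)\ge\inf_x(u(x)\leftrightarrow v(x))$.

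The only step needing care is the sup-representation of $u^\ell$ and $u^r$, particularly at the boundary points $u_0^-$ and $u_1^-$. I expect it to follow directly from convexity, normality and the definition of the left and right parts, without any use of upper semicontinuity beyond the existence of the endpoints. Nothing about the t-norm beyond left-continuity, through properties (7) and (8), is used.
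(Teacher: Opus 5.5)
Your argument is correct. Step~1 is essentially the paper's first half: the paper gets $(a\leftrightarrow c)\wedge(b\leftrightarrow d)\le(a\wedge b)\leftrightarrow(c\wedge d)$ from the adjunction in Proposition~\ref{prop:residuum-properties}(2) and monotonicity of $\&$, whereas you use (7)--(8), but the content is the same. Step~2 takes a genuinely different route.

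The paper passes to $\delta(s,t)=1-(s\leftrightarrow t)$ and picks $x_0$ nearly attaining $D=\sup_x\max\{\delta_L(x),\delta_R(x)\}$. It reduces by symmetry to $u^\ell(x_0)<v^\ell(x_0)$ and then splits into cases: $v^\ell(x_0)<1$, or $v^\ell(x_0)=1$ with $v^r(x_0)=1$ or $v^r(x_0)<1$. This exhibits a witness $z\in\{x_0,v_1^-\}$ with $u(z)\leftrightarrow v(z)\le u^\ell(x_0)\leftrightarrow v^\ell(x_0)$. That half uses only $s\leftrightarrow 1=s$, normality and monotonicity of $u^\ell$, never distributivity.

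Your observation that $u^\ell$ and $u^r$ are the monotone envelopes $\sup_{y\le x}u(y)$ and $\sup_{y\ge x}u(y)$ replaces the $\varepsilon$-approximation, the symmetry reduction and the case split with one estimate. It also gives a finer, direction-preserving bound, $u^\ell(x)\to v^\ell(x)\ge\inf_{y\le x}\bigl(u(y)\to v(y)\bigr)$, for each of the four one-sided implications separately. The cost is the envelope lemma, which needs convexity plus $u(u_1^-)=1$ (closedness of the core), and the use of (7), which is where left-continuity enters.

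Since your supremum is always attained (at $y=x$ when $x\le u_1^-$, at $y=u_1^-$ otherwise), you could drop (7) by choosing the maximizing $y$. For $v^\ell(x_0)\to u^\ell(x_0)$ this produces essentially the paper's witnesses $x_0$ and $v_1^-$. So the two proofs rest on the same idea, and yours is organized much more economically.

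The paper's displayed $u^r$ switches at $u_1^-$ rather than $u_1^+$. Since $u\equiv1$ on $[u_1^-,u_1^+]$, this agrees with your version.
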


\begin{proof}
We first prove the inequality
\[
\inf_{x\in\mathbb{R}}\bigl(u(x)\leftrightarrow v(x)\bigr)\ge S(u,v).
\]
From the definition of $S$ and the identity
$\inf_x(A_x\wedge B_x)=\inf_x A_x\wedge\inf_x B_x$, we have
\[
S(u,v)=\inf_{x\in\mathbb{R}}
\bigl[(u^\ell(x)\leftrightarrow v^\ell(x))\wedge
(u^r(x)\leftrightarrow v^r(x))\bigr].
\]
Fix $x\in\mathbb{R}$ and put
\[
a=u^\ell(x),\quad b=u^r(x),\quad c=v^\ell(x),\quad d=v^r(x),
\]
so that $u(x)=a\wedge b$ and $v(x)=c\wedge d$. Let
$r=(a\leftrightarrow c)\wedge(b\leftrightarrow d)$. Since $r\le a\to c$
and $r\le b\to d$, Proposition
\ref{prop:residuum-properties}(2) gives
$a\mathbin{\&}r\le c$ and $b\mathbin{\&}r\le d$. As $a\wedge b\le a$
and $a\wedge b\le b$, the monotonicity of $\&$ yields
\[
(a\wedge b)\mathbin{\&}r\le a\mathbin{\&}r\le c
\quad\text{and}\quad
(a\wedge b)\mathbin{\&}r\le b\mathbin{\&}r\le d,
\]
hence $(a\wedge b)\mathbin{\&}r\le c\wedge d$. Applying Proposition
\ref{prop:residuum-properties}(2) again,
we obtain $r\le(a\wedge b)\to(c\wedge d)$. Interchanging the roles of
$(a,b)$ and $(c,d)$ gives the symmetric inequality
$r\le(c\wedge d)\to(a\wedge b)$. Therefore
\[
(a\wedge b)\leftrightarrow(c\wedge d)
\ \ge\ (a\leftrightarrow c)\wedge(b\leftrightarrow d)=r,
\]
and taking the infimum over $x\in\mathbb{R}$ yields the desired
inequality.

It remains to prove the reverse inequality. Set
\[
\delta(s,t):=1-(s\leftrightarrow t)\qquad (s,t\in[0,1]),
\]
and write
\[
\delta_L(x):=\delta\bigl(u^\ell(x),v^\ell(x)\bigr),\qquad
\delta_R(x):=\delta\bigl(u^r(x),v^r(x)\bigr),\qquad
D:=\sup_{x\in\mathbb{R}}\max\bigl\{\delta_L(x),\delta_R(x)\bigr\}.
\]
Since $S(u,v)=1-D$, it suffices to prove
\begin{equation}\label{eq:reverse-delta}
\sup_{x\in\mathbb{R}}\delta(u(x),v(x))\ge D.
\end{equation}

\noindent\textbf{Case 1} ($D=0$). Since $\delta\ge0$ pointwise,
\eqref{eq:reverse-delta} is immediate.

\medskip
\noindent\textbf{Case 2} ($D>0$). Fix $0<\varepsilon<D$. By the
definition of $D$ as a supremum, there exists $x_0\in\mathbb{R}$ such
that
\[
\max\bigl\{\delta_L(x_0),\delta_R(x_0)\bigr\}>D-\varepsilon.
\]
We treat the two possibilities separately.

\medskip
\noindent\textbf{Case 2a} ($\delta_L(x_0)>D-\varepsilon$). Put
\[
p=u^\ell(x_0),\qquad q=v^\ell(x_0).
\]
Since $\delta(p,q)=\delta_L(x_0)>D-\varepsilon>0$, we have $p\ne q$.
As $\delta$ is symmetric and the inequality \eqref{eq:reverse-delta} is
symmetric in $u$ and $v$, we may assume $p<q$; hence $p<1$. From the
normality condition $u^\ell(x_0)\vee u^r(x_0)=1$ we get $u^r(x_0)=1$,
and therefore $u(x_0)=p$.

\noindent\textbf{Case 2a(i)} ($q<1$). Then
$v^\ell(x_0)\vee v^r(x_0)=1$ forces $v^r(x_0)=1$, so $v(x_0)=q$.
Consequently
\[
\delta(u(x_0),v(x_0))=\delta(p,q)>D-\varepsilon.
\]

\noindent\textbf{Case 2a(ii)} ($q=1$). Since $s\leftrightarrow1=s$, we
have $\delta(p,1)=1-p$.

\noindent\textbf{Case 2a(ii)-1} ($v^r(x_0)=1$). Then $v(x_0)=1$, and
therefore
\[
\delta(u(x_0),v(x_0))=\delta(p,1)=1-p>D-\varepsilon.
\]

\noindent\textbf{Case 2a(ii)-2} ($v^r(x_0)<1$). Let $y=v^-_1$ be the
left endpoint of the core $[v]_1=\{x:v(x)=1\}$. Since $v^\ell$ is
non-decreasing and $v^\ell(x_0)=q=1$, we have $y\le x_0$, and
$v(y)=1$ because $y$ belongs to the core. As $u^\ell$ is
non-decreasing, $u^\ell(y)\le u^\ell(x_0)=p<1$; normality then gives
$u^r(y)=1$, and hence $u(y)=u^\ell(y)$. Therefore
\[
\delta(u(y),v(y))
=1-u^\ell(y)
\ \ge\ 1-p
=\delta(p,1)>D-\varepsilon.
\]

\medskip
\noindent\textbf{Case 2b} ($\delta_R(x_0)>D-\varepsilon$). This case is
symmetric to Case 2a: replace the left parts by the right parts, use
that $u^r$ is non-increasing in place of $u^\ell$ being non-decreasing,
and replace the left core endpoint $v^-_1$ by the right core endpoint
$v^+_1$; the argument of Case 2a then applies verbatim.

In every case we have proved some point $z\in\mathbb{R}$ (namely
$z=x_0$ or $z=y$) with $\delta(u(z),v(z))>D-\varepsilon$, so
\[
\sup_{x\in\mathbb{R}}\delta(u(x),v(x))>D-\varepsilon.
\]
Letting $\varepsilon\to0$ proves \eqref{eq:reverse-delta}. Therefore
\[
\inf_{x\in\mathbb{R}}\bigl(u(x)\leftrightarrow v(x)\bigr)
=1-\sup_{x\in\mathbb{R}}\delta(u(x),v(x))
\le 1-D=S(u,v),
\]
which completes the proof.
\end{proof}

\begin{prop}\label{Lukasiewicz t-norm in E^1}
    Let \(\&\) be the {\L}ukasiewicz t-norm. \(S_{\Luk}\)-convergence coincides with uniform convergence.
\end{prop}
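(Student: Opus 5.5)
The plan is to reduce everything to Lemma~\ref{pointwise representation}, which gives $S(u,v)=\inf_{x\in\mathbb{R}}(u(x)\leftrightarrow v(x))$. For the {\L}ukasiewicz t-norm, the biresiduum has a closed form. From $a\to c=\min(1,1-a+c)$ we get
\[
a\leftrightarrow c=\min(1,1-a+c)\wedge\min(1,1-c+a)=1-|a-c|.
\]
Substituting this into Lemma~\ref{pointwise representation} yields
\[
S_{\Luk}(u,v)=\inf_{x\in\mathbb{R}}\bigl(1-|u(x)-v(x)|\bigr)=1-\sup_{x\in\mathbb{R}}|u(x)-v(x)|=1-\|u-v\|_{\infty}.
\]
So $S_{\Luk}$ is exactly $1-d_\infty$ on $\mathbb{E}^1$.

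Next we translate the convergence criterion. By the definition of $S$-convergence (Proposition 8.9 of \cite{Zhang2024IntroductoryNO} applied to the symmetrization), $u_i\to u$ means $\sup_{i}\inf_{j\ge i}S_{\Luk}(u,u_j)=1$. With the formula above, this becomes $\inf_i\sup_{j\ge i}\|u-u_j\|_\infty=0$, i.e.\ $\limsup_i\|u_i-u\|_\infty=0$. For a nonnegative net this is equivalent to $\|u_i-u\|_\infty\to0$.

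I would write out both directions with $\varepsilon$'s to be safe:
\begin{itemize}
\item If $\sup_i\inf_{j\ge i}S(u,u_j)=1$, then for each $\varepsilon>0$ there is $i_0$ with $\inf_{j\ge i_0}(1-\|u-u_j\|_\infty)>1-\varepsilon$. Hence $\|u_j-u\|_\infty<\varepsilon$ for all $j\ge i_0$.
\item Conversely, if $\|u_j-u\|_\infty<\varepsilon$ for all $j\ge i_0$, then $\inf_{j\ge i_0}S(u,u_j)\ge1-\varepsilon$.
\end{itemize}
Remark~\ref{rmk:first-countable}, with $X=\mathbb{R}$ and $Y=[0,1]$, identifies $\|u_i-u\|_\infty\to0$ with uniform convergence in the sense of Definition~\ref{def:convergence}, which finishes the argument.

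There is no real obstacle here. The substantive work, equating $S$ with the pointwise biresiduum of the membership functions rather than of their left and right parts, is already done in Lemma~\ref{pointwise representation}. The only care needed is the elementary verification of $a\leftrightarrow c=1-|a-c|$, by splitting into the cases $a\le c$ and $a>c$, and correctly handling the $\sup\inf$ of the net.
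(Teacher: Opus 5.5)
Your proposal is correct and follows the paper's own proof: you apply Lemma~\ref{pointwise representation}, use the {\L}ukasiewicz biresiduum $a\leftrightarrow c=1-|a-c|$ to get $S_{\Luk}(u,v)=1-\|u-v\|_\infty$, and read off the equivalence with uniform convergence. Your explicit $\varepsilon$-translation of the $\sup\inf$ criterion simply spells out what the paper compresses into ``$S_{\Luk}(u,u_i)\to1$ if and only if $\|u-u_i\|_\infty\to0$''.
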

\begin{proof}
By Lemma~\ref{pointwise representation}, for the {\L}ukasiewicz t-norm,
\[
S_{\Luk}(u,u_i)=\inf_{x\in\mathbb{R}}\bigl(1-|u(x)-u_i(x)|\bigr)=1-\|u-u_i\|_\infty.
\]
Hence $S_{\Luk}(u,u_i)\to1$ if and only if $\|u-u_i\|_\infty\to0$; that is, $S_{\Luk}$-convergence coincides with uniform convergence. 
\end{proof}
\begin{remark}
  The function \(S\) is a \([0,1]\)-valued similarity rather than a metric,
since \(S(u,u)=1\). Its complement
\[
d_S(u,v):=1-S(u,v)
\]
is a bounded metric on \(\mathbb{E}^1\), with values in \([0,1]\), and the
maximum value \(1\) is attained. In the {\L}ukasiewicz case,
\[
d_{S_{\Luk}}(u,v)=\|u-v\|_\infty.
\]
\end{remark}
\begin{prop}\label{product norm in E^1}
   Let \(\&\) be the product t-norm. A net \(\{u_i\}_{i\in D}\) in \(\mathbb{E}^1\) \(S_{*}\)-converges to \(u\) if and only if
   \[
   \forall\,\varepsilon>0,\ \exists\,i_0\in D,\ \forall\,i\ge i_0,\ \forall\,x\in\mathbb{R}:\quad
   u_i(x)=u(x)=0\ \ \text{or}\ \ \frac{\min(u_i(x),u(x))}{\max(u_i(x),u(x))}>1-\varepsilon .
   \]
\end{prop}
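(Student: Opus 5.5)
By Lemma~\ref{pointwise representation}, $S_*(u,u_i)=\inf_{x\in\mathbb{R}}\bigl(u(x)\leftrightarrow u_i(x)\bigr)$. The first step is therefore to compute the product biresiduum explicitly. From the formula for $\to$ under the product t-norm, for $a,b\in[0,1]$ we get
\[
a\leftrightarrow b=
\begin{cases}
1, & a=b,\\
\dfrac{\min(a,b)}{\max(a,b)}, & a\neq b.
\end{cases}
\]
Note that when $a=b=0$ the value is $1$, and when $a=b>0$ it is $1=\min/\max$. Hence $a\leftrightarrow b=1$ if $a=b=0$, and $a\leftrightarrow b=\min(a,b)/\max(a,b)$ otherwise, with the convention that this ratio is $0$ if exactly one of $a,b$ is zero.

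Next I unwind the convergence criterion. The net $S$-converges to $u$ iff $\sup_{i}\inf_{j\ge i}S(u,u_j)=1$. This holds iff for every $\varepsilon>0$ there is $i_0$ such that $S(u,u_j)>1-\varepsilon$ for all $j\ge i_0$. (If the sup exceeds $1-\varepsilon$, pick $i_0$ with $\inf_{j\ge i_0}S(u,u_j)>1-\varepsilon$; the converse is immediate.)

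By the lemma, $S(u,u_j)>1-\varepsilon$ for all $j\ge i_0$ implies that for every $x$ we have $u(x)\leftrightarrow u_j(x)>1-\varepsilon$. By the computation above, this says exactly that either $u_j(x)=u(x)=0$ or the ratio exceeds $1-\varepsilon$. This gives the forward direction.

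For the converse, the pointwise condition only yields $\inf_x\bigl(u(x)\leftrightarrow u_j(x)\bigr)\ge 1-\varepsilon$, because an infimum of quantities $>1-\varepsilon$ is only $\ge 1-\varepsilon$. I handle this by applying the hypothesis with $\varepsilon/2$, which gives $S(u,u_j)\ge 1-\varepsilon/2>1-\varepsilon$ for all $j\ge i_0$. Hence $\inf_{j\ge i_0}S(u,u_j)>1-\varepsilon$, and letting $\varepsilon\to0$ shows the supremum is $1$.

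The only delicate points are the convention at $0$ (the case where exactly one of $u(x),u_i(x)$ is zero) and this strict-versus-nonstrict inequality when passing to infima. Both are resolved as described, so I expect no real obstacle beyond bookkeeping. Everything substantive is carried by Lemma~\ref{pointwise representation}.
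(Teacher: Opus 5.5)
Your proposal is correct and follows the paper's proof: both reduce $S_*(u,u_i)$ to $\inf_{x}\bigl(u(x)\leftrightarrow u_i(x)\bigr)$ via Lemma~\ref{pointwise representation}, then read off the condition from the explicit product biresiduum, which is $1$ when both values vanish and $\min/\max$ otherwise. Your additions are bookkeeping the paper leaves implicit in ``equivalent to the displayed conditions'': you unwind $\sup_i\inf_{j\ge i}S(u,u_j)=1$ explicitly and handle the strict-versus-nonstrict inequality with the $\varepsilon/2$ step.
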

\begin{proof}
By Lemma~\ref{pointwise representation}, \(S_{*}(u,u_i)=\inf_{x\in\mathbb{R}}(u_i(x)\leftrightarrow u(x))\), and for the product t-norm
\[
R(s,t)=
\begin{cases}
1, & s=t=0,\\[2pt]
\dfrac{\min(s,t)}{\max(s,t)}, & \max(s,t)>0.
\end{cases}
\]
Thus
\[
S_*(u,v)=\inf_{x\in\mathbb R}R(u(x),v(x)).
\]
Therefore \(S_{*}(u,u_i)\to1\) is equivalent to the displayed conditions.
\end{proof}

\begin{prop}\label{minimum t-norm in E^1}
   Let \(\&\) be the minimum t-norm. A net \(\{u_i\}_{i\in D}\) of \(\mathbb{E}^1\) \(S_{\wedge}\)-converges to \(u\) if and only if
   \[
   \forall\,r<1,\ \exists\,i_0\in D,\ \forall\,i\ge i_0,\ \forall\,x\in\mathbb{R}:\quad
   u_i(x)=u(x)\ \ \text{or}\ \ \min\bigl(u_i(x),u(x)\bigr)>r .
   \]
\end{prop}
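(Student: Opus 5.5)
By Lemma~\ref{pointwise representation} we have $S_{\wedge}(u,u_i)=\inf_{x\in\mathbb{R}}\bigl(u_i(x)\leftrightarrow u(x)\bigr)$. So the plan is to compute the G\"odel biresiduum explicitly and translate the condition $\sup_{i}\inf_{j\ge i}S_\wedge(u,u_j)=1$ into the stated $\forall r<1$ form, exactly as in Propositions~\ref{Lukasiewicz t-norm in E^1} and~\ref{product norm in E^1}.

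First, from the G\"odel residuum ($a\to c=1$ if $a\le c$, and $c$ otherwise) we get
\[
s\leftrightarrow t=\begin{cases}1,& s=t,\\ \min(s,t),& s\neq t.\end{cases}
\]
In other words, $s\leftrightarrow t>r$ if and only if $s=t$ or $\min(s,t)>r$, for any $r<1$.

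Second, recall that $S_\wedge$-convergence means: for every $r<1$ there is $i_0$ with $\inf_{j\ge i_0}S_\wedge(u,u_j)>r$. The net condition is monotone in $i$, so $\sup_i\inf_{j\ge i}S_\wedge(u,u_j)=1$ is equivalent to this.

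For the implication from convergence to the stated condition, fix $r<1$ and take $i_0$ as above. Then for every $i\ge i_0$ and every $x$ we have $u_i(x)\leftrightarrow u(x)\ge S_\wedge(u,u_i)>r$. The first step then gives $u_i(x)=u(x)$ or $\min(u_i(x),u(x))>r$.

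For the converse, the one subtle point is strict versus non-strict inequality when passing to the infimum. The stated condition only gives $u_i(x)\leftrightarrow u(x)>r$ pointwise, which yields $S_\wedge(u,u_i)\ge r$ rather than $>r$. To handle this, given $r<1$ I apply the hypothesis with some $r'\in(r,1)$. For $i\ge i_0(r')$ this gives $S_\wedge(u,u_i)\ge r'>r$, hence $\inf_{j\ge i_0}S_\wedge(u,u_j)\ge r'$. Letting $r\to1$ shows the supremum equals $1$.

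The argument is routine once Lemma~\ref{pointwise representation} is in hand; the only care needed is this strict-inequality bookkeeping.
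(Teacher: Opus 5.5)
Your proof is correct and follows the same route as the paper: reduce via Lemma~\ref{pointwise representation} to the G\"odel biresiduum ($1$ if $s=t$, $\min(s,t)$ otherwise) and read off the condition. The paper simply calls this step immediate; choosing an auxiliary $r'\in(r,1)$ to turn the pointwise strict inequality into a strict bound on the infimum is exactly the bookkeeping the paper leaves implicit.
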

\begin{proof}
By Lemma~\ref{pointwise representation} and the fact that for the minimum t-norm
\[
s\leftrightarrow t=\min(s,t)\ \ (s\ne t),\qquad s\leftrightarrow t=1\ \ (s=t),
\]
the assertion is immediate.
\end{proof}

\begin{prop}\label{convergence relation under different t-norm}
    Let  \(\{u_i\}_{i\in D}\) be a net in \(\mathbb{E}^1\).
    Then \[S_{\wedge}\text{-convergence}  \implies S_{*}\text{-convergence} \implies S_{\Luk}\text{-convergence}.\] 
\end{prop}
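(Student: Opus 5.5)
The plan is to compare the three bi-residua pointwise on $[0,1]$. By Lemma~\ref{pointwise representation}, each of $S_{\wedge}$, $S_*$ and $S_{\Luk}$ is the infimum over $x\in\mathbb{R}$ of the corresponding bi-implication evaluated at $(u_i(x),u(x))$. So it suffices to establish, for all $s,t\in[0,1]$, the inequalities
\[
R_\wedge(s,t)\le R_*(s,t)\le R_{\Luk}(s,t),
\]
where $R_\&(s,t)=s\leftrightarrow t$ for the indicated t-norm. Taking the infimum over $x$ then gives
\[
S_\wedge(u,v)\le S_*(u,v)\le S_{\Luk}(u,v)\quad\text{for all } u,v\in\mathbb{E}^1 .
\]

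\textbf{Pointwise inequalities.} If $s=t$, all three values equal $1$. Otherwise assume without loss of generality that $s<t$.
\begin{itemize}
\item Gödel versus product: $R_\wedge(s,t)=s$ and $R_*(s,t)=s/t$. Since $0<t\le 1$, we get $s\le s/t$.
\item Product versus Łukasiewicz: $R_{\Luk}(s,t)=1-t+s$. The claim $s/t\le 1-t+s$ is equivalent to $s\le t-t^2+st$, i.e.\ to
\[
0\le t-s-t(t-s)=(t-s)(1-t),
\]
which holds because $t\ge s$ and $t\le 1$.
\end{itemize}
The case $t=0$ cannot occur here, since $s<t$.

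\textbf{Convergence.} The convergence criterion for the symmetrization is $\sup_{i\in D}\inf_{j\ge i}S(u,u_j)=1$. This quantity is monotone in $S$. Hence if it equals $1$ for $S_\wedge$, the pointwise domination forces it to equal $1$ for $S_*$, and in turn for $S_{\Luk}$. This yields both implications.

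The only step needing care is the algebraic inequality $s/t\le 1-t+s$, and it reduces to the factorization displayed above. As an alternative, one could argue directly from Propositions~\ref{product norm in E^1} and~\ref{minimum t-norm in E^1}, but the pointwise domination route is cleaner.
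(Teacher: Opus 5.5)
Your proposal is correct and follows the paper's proof: both compare the three bi-implications pointwise and reduce $s/t\le 1-t+s$ to $(t-s)(1-t)\ge 0$, which is the paper's $m(1-M)\le M(1-M)$. Both then pass to $S_{\wedge}\le S_{*}\le S_{\Luk}$ via Lemma~\ref{pointwise representation} and conclude from the monotonicity of $\sup_{i}\inf_{j\ge i}S(u,u_j)$.
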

\begin{proof}
For any $a,b\in[0,1]$ one has
\[
a\leftrightarrow_{\wedge}b\ \le\ a\leftrightarrow_{*}b\ \le\ a\leftrightarrow_{\Luk}b .
\]
Indeed, if $a=b$ all three equal $1$; if $a\ne b$, writing $m=\min(a,b)$ and $M=\max(a,b)$,
\[
m\ \le\ \frac{m}{M}\ \le\ 1-(M-m),
\]
where the second inequality is equivalent to $m(1-M)\le M(1-M)$, i.e.\ $m\le M$. By Lemma~\ref{pointwise representation} and taking the infimum over $x\in\mathbb{R}$, we obtain
\[
S_{\wedge}(u,v)\ \le\ S_{*}(u,v)\ \le\ S_{\Luk}(u,v),
\]
whence $S_{\wedge}$-convergence $\Rightarrow$ $S_{*}$-convergence $\Rightarrow$ $S_{\Luk}$-convergence.
\end{proof}

\begin{thm}
Let \(\tau_{\Luk}\), \(\tau_{*}\), and \(\tau_{\wedge}\) denote the open ball topologies on \((\mathbb{E}^1,S)\) induced by the {\L}ukasiewicz, product, and G\"odel t-norms, respectively. Then
\[
\tau_{\Luk}\subseteq \tau_{*}\subseteq \tau_{\wedge}.
\]
\end{thm}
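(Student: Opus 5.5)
We will compare the topologies directly through their bases of open balls, using the pointwise inequality already established in the proof of Proposition~\ref{convergence relation under different t-norm}: by Lemma~\ref{pointwise representation},
\[
S_{\wedge}(u,v)\le S_{*}(u,v)\le S_{\Luk}(u,v)\qquad(u,v\in\mathbb{E}^1).
\]
To show $\tau_{\Luk}\subseteq\tau_{*}$, it suffices to show that every $S_{\Luk}$-open ball $B_{\Luk}(u,r)=\{v: S_{\Luk}(u,v)>r\}$ is $\tau_{*}$-open. Equivalently, for each $v\in B_{\Luk}(u,r)$ we must find some $s<1$ with $B_{*}(v,s)\subseteq B_{\Luk}(u,r)$. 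The inclusion $\tau_{*}\subseteq\tau_{\wedge}$ follows by the identical argument with $(\Luk,*)$ replaced by $(*,\wedge)$.

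\textbf{Step 1 (balls are open in their own topology).} Fix $v$ with $S_{\Luk}(u,v)=t>r$. By transitivity of $S_{\Luk}$, if $S_{\Luk}(v,w)>s$, then
\[
S_{\Luk}(u,w)\ge S_{\Luk}(v,w)\,\&_{\Luk}\,S_{\Luk}(u,v)\ge \max(s+t-1,0).
\]
Choosing $s<1$ with $s+t-1>r$ (possible since $t>r$) gives $B_{\Luk}(v,s)\subseteq B_{\Luk}(u,r)$.

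\textbf{Step 2 (comparison).} Since $S_{*}\le S_{\Luk}$, we have $B_{*}(v,s)\subseteq B_{\Luk}(v,s)\subseteq B_{\Luk}(u,r)$. Hence $B_{\Luk}(u,r)$ is a union of $\tau_{*}$-balls and is therefore $\tau_{*}$-open. For the second inclusion, Step~1 is carried out instead inside $(\mathbb{E}^1,S_{*})$ using the product t-norm: given $S_{*}(u,v)=t>r$, choose $s<1$ with $st>r$. Then $S_{\wedge}\le S_{*}$ yields $B_{\wedge}(v,s)\subseteq B_{*}(u,r)$.

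The only delicate point is Step~1: a ball centered at $u$ must contain a ball around each of its points. This is where the continuity of the t-norm at $(1,t)$, together with the transitivity of $S$ (the separatedness proposition), is used. An alternative route would be to argue via Proposition~\ref{convergence relation under different t-norm}: convergence of nets in the finer structure implies convergence in the coarser one, and topologies determined by net convergence compare accordingly. Doing so, however, still requires knowing that the open ball topology is characterized by its convergent nets, which is exactly what the cited Proposition~8.9 provides. With that, every $\tau_{\Luk}$-closed set is $\tau_{*}$-closed. I would present the direct ball argument and mention the net argument as a remark.
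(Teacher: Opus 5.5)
Your proposal is correct, but it takes a different route from the paper. The paper's entire proof is an appeal to Proposition~\ref{convergence relation under different t-norm}. By the net characterization of the open ball topology recalled in the preliminaries ($u_i\to u$ iff $\sup_i\inf_{j\ge i}S(u,u_j)=1$), the pointwise inequality $S_{\wedge}\le S_{*}\le S_{\Luk}$ shows that every net converging in $\tau_{\wedge}$ converges to the same limit in $\tau_{*}$, and similarly from $\tau_{*}$ to $\tau_{\Luk}$. The identity maps are therefore continuous, which gives the inclusions. This is exactly the ``alternative route'' in your closing remark.

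You argue directly with bases instead. You show that each ball $B_{\Luk}(u,r)$ contains a ball $B_{\Luk}(v,s)$ centered at each of its points, using the {\L}ukasiewicz transitivity of $S_{\Luk}$. For the second inclusion you use the product transitivity of $S_{*}$, with $s\in(r/t,1)$. You then shrink to $B_{*}(v,s)$ via $S_{*}\le S_{\Luk}$. Your computations are correct. Centering the small ball at $v$ itself is the right choice: the comparison $B_{*}(w,s)\subseteq B_{\Luk}(w,s)$ alone would not place $v$ inside a $\tau_{*}$-ball with a different center $w$.

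Your version is self-contained. It uses only the pointwise inequality and the transitivity of $S$ already proved in the paper. It also makes explicit the property that the paper uses only implicitly, through the cited Proposition~8.9 and the fact that a topology is determined by its convergent nets: every ball contains a ball around each of its points. The paper's version is shorter because it reuses the convergence comparison it has just established.
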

\begin{proof}
    By Proposition \ref{convergence relation under different t-norm}.
\end{proof}
\section{Completeness of the Fuzzy Order}
\begin{definition}
    Suppose $\{x_i\}_{i\in D}$ is a net and $a$ is an element of a \([0,1]\)-enriched category $X$. We say that
\begin{enumerate}[label={\textup{(\arabic*)}}]
    \item $\{x_i\}_{i\in D}$ is Cauchy (also called biCauchy) if
    \[
    \sup_{i\in D}\inf_{j,k\geq i} X(x_j,x_k) = 1.
    \]
    \item $a$ is a bilimit of $\{x_i\}_{i\in D}$ if for all $x\in X$,
    \[
    \sup_{i\in D}\inf_{j\geq i} X(x,x_j) = X(x,a), \quad \sup_{i\in D}\inf_{j\geq i} X(x_j,x) = X(a,x).
    \]
    \item  $\{x_i\}_{i \in D}$ is forward Cauchy if
		\[
		\sup_{i \in D} \inf_{k \geq j \geq i} X(x_j, x_k) = 1.
		\]
	\item  $b$ is a Yoneda limit of $\{x_i\}_{i \in D}$ if for all $y \in X$,
		\[
		X(b, y) = \sup_{i \in D} \inf_{j\ge i} X(x_j, y).
		\]
\end{enumerate}
It is clear that a net has at most one bilimit up to isomorphism.
\end{definition}
\begin{lem}[\cite{Zhang2024IntroductoryNO}, Lemma 8.11]\label{lem:bilimit}
    Let $\{x_i\}_{i\in D}$ be a Cauchy net and let $a$ be an element of a \([0,1]\)-enriched category $X$. The following are equivalent:
\begin{enumerate} [label={\textup{(\arabic*)}}]
    \item $a$ is a bilimit of $\{x_i\}_{i\in D}$.
    \item $\sup_{i\in D}\inf_{j\geq i} X(a,x_j) = 1$ and $\sup_{i\in D}\inf_{j\geq i} X(x_j,a) = 1$.
    \item $\{x_i\}_{i\in D}$ converges to $a$ in the open ball topology of the symmetrization of $X$.
\end{enumerate}
\end{lem}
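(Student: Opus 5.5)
The plan is to prove the cycle (1)$\Rightarrow$(2)$\Rightarrow$(3)$\Rightarrow$(2)$\Rightarrow$(1). Only the transitivity axiom $X(y,z)\mathbin{\&}X(x,y)\le X(x,z)$, continuity of $\&$ at $(1,1)$, and the Cauchy hypothesis will be needed.

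For (1)$\Rightarrow$(2), substitute $x=a$ into both bilimit equations. Reflexivity gives $X(a,a)=1$, so both suprema in (2) equal $1$ immediately.

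For (2)$\Leftrightarrow$(3), use the convergence criterion from Proposition 8.9 of \cite{Zhang2024IntroductoryNO}, applied to the symmetrization. Convergence to $a$ means $\sup_i\inf_{j\ge i}\bigl(X(a,x_j)\wedge X(x_j,a)\bigr)=1$. The infimum of a meet is the meet of the infima, and the map $i\mapsto\inf_{j\ge i}(\cdot)$ is monotone along the directed set $D$. Hence the supremum of the meet equals the meet of the two suprema: given $i_1,i_2$, take $i\ge i_1,i_2$. So (3) is equivalent to both suprema in (2) being $1$.

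The main work is (2)$\Rightarrow$(1); I will prove the first bilimit identity and obtain the second dually.

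\textbf{Inequality ``$\ge$''.} Fix $x\in X$. Transitivity gives $X(a,x_j)\mathbin{\&}X(x,a)\le X(x,x_j)$. Taking $\inf_{j\ge i}$ and then $\sup_i$, and using that $\&$ is monotone and continuous so it commutes with these directed operations in the needed direction, we get $\sup_i\inf_{j\ge i}X(x,x_j)\ge 1\mathbin{\&}X(x,a)=X(x,a)$.

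\textbf{Inequality ``$\le$''.} We have $X(x_j,a)\mathbin{\&}X(x,x_j)\le X(x,a)$ for every $j$. Fix $\varepsilon>0$ and choose $i_0$ such that $X(x_j,a)>1-\varepsilon$ for all $j\ge i_0$; this is possible by (2). For any $i$, pick $j\ge i,i_0$. Then
\[
(1-\varepsilon)\mathbin{\&}\inf_{k\ge i}X(x,x_k)\le(1-\varepsilon)\mathbin{\&}X(x,x_j)\le X(x,a).
\]
Take the supremum over $i$ and let $\varepsilon\to0$; continuity of $\&$ yields $\sup_i\inf_{j\ge i}X(x,x_j)\le X(x,a)$.

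Notably, the Cauchy hypothesis is not used in this direction; it matters only in that, without it, such an $a$ may fail to exist. The dual identity $\sup_i\inf_{j\ge i}X(x_j,x)=X(a,x)$ follows by the same argument with arrows reversed, i.e.\ applied in $X^{\mathrm{op}}$.

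The expected obstacle is the step in the ``$\ge$'' inequality where $\&$ is pushed through $\sup\inf$. Monotonicity alone gives $\inf_{j\ge i}(c\mathbin{\&}y_j)\ge c\mathbin{\&}\inf_{j\ge i}y_j$. Applying it with $c=X(x,a)$ and $y_j=X(a,x_j)$ bounds $\sup_i\inf_{j\ge i}X(x,x_j)$ below by $\sup_i\bigl(X(x,a)\mathbin{\&}\inf_{j\ge i}X(a,x_j)\bigr)$. Left-continuity of $\&$ in its second argument then turns this into $X(x,a)\mathbin{\&}1$.

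If only a left-continuous t-norm were assumed, the ``$\le$'' inequality would need care, since there $\varepsilon\to0$ is a limit from below and left-continuity is exactly what is required. I would therefore verify that every limit taken is approached from below, which holds in both directions.
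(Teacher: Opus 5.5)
Your proof is correct and is essentially the standard argument behind this lemma, which the paper quotes from Zhang's notes without giving a proof. It runs (1)$\Rightarrow$(2) by putting $x=a$; (2)$\Leftrightarrow$(3) via the convergence criterion for the symmetrization together with $\sup_i\inf_{j\ge i}(f_j\wedge g_j)=\sup_i\inf_{j\ge i}f_j\wedge\sup_i\inf_{j\ge i}g_j$ over a directed set; and (2)$\Rightarrow$(1) from the two transitivity inequalities $X(a,x_j)\mathbin{\&}X(x,a)\le X(x,x_j)$ and $X(x_j,a)\mathbin{\&}X(x,x_j)\le X(x,a)$, with the Cauchy hypothesis indeed playing no role.

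One small correction to your opening summary: what the argument actually uses is $\sup_{z<1}c\mathbin{\&}z=c$ for every $c\in[0,1]$. This does not follow from continuity of $\&$ at $(1,1)$ alone, but it does hold for the continuous t-norms assumed throughout.
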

\begin{lem}[\cite{Zhang2024IntroductoryNO}, Lemma 9.2]\label{lem:yoneda limit is bilimit}
    Suppose $\{x_i\}_{i \in D}$ is a Cauchy net and $b$ is an element of a \([0,1]\)-enriched category $X$. Then $b$ is a Yoneda limit of $\{x_i\}_{i \in D}$ if and only if it is a bilimit of $\{x_i\}_{i \in D}$.
\end{lem}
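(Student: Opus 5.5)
The plan is to prove the two implications separately. The direction ``bilimit $\Rightarrow$ Yoneda limit'' is immediate from the definitions. The converse will be reduced to condition (2) of Lemma~\ref{lem:bilimit}, which only needs two one-sided convergence statements.

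\emph{Bilimit $\Rightarrow$ Yoneda limit.} Suppose $b$ is a bilimit of the Cauchy net $\{x_i\}_{i\in D}$. The second defining identity of a bilimit reads
\[
\sup_{i\in D}\inf_{j\ge i}X(x_j,y)=X(b,y)\quad\text{for all }y\in X.
\]
This is exactly the defining identity of a Yoneda limit, so nothing further is needed.

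\emph{Yoneda limit $\Rightarrow$ bilimit.} Suppose $b$ is a Yoneda limit. By Lemma~\ref{lem:bilimit}, since the net is Cauchy, it suffices to verify
\[
\sup_{i}\inf_{j\ge i}X(x_j,b)=1 \quad\text{and}\quad \sup_{i}\inf_{j\ge i}X(b,x_j)=1 .
\]

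For the first equality, specialize the Yoneda identity to $y=b$. This gives $\sup_i\inf_{j\ge i}X(x_j,b)=X(b,b)=1$ by reflexivity.

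For the second equality, I will use the Cauchy condition. Fix $r<1$. By $\sup_i\inf_{j,k\ge i}X(x_j,x_k)=1$, there is $i_0\in D$ with $X(x_j,x_k)>r$ for all $j,k\ge i_0$. Now fix any $k\ge i_0$ and apply the Yoneda identity with $y=x_k$:
\[
X(b,x_k)=\sup_{i}\inf_{j\ge i}X(x_j,x_k)\ \ge\ \inf_{j\ge i_0}X(x_j,x_k)\ \ge\ r .
\]
Hence $\inf_{k\ge i_0}X(b,x_k)\ge r$. Since $r<1$ was arbitrary, $\sup_i\inf_{k\ge i}X(b,x_k)=1$. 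Lemma~\ref{lem:bilimit} then shows that $b$ is a bilimit.

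The only step requiring care is the second convergence $\sup_i\inf_{j\ge i}X(b,x_j)=1$. Here the Yoneda identity controls $X(b,\cdot)$ only through the tail infimum of the net. The point is that, when evaluated at a late term $x_k$, this infimum is bounded below by the uniform Cauchy bound. One must choose the index $i_0$ independently of $k$, which the biCauchy condition (with both $j,k\ge i$) provides. No use of the particular t-norm beyond what is already packaged into Lemma~\ref{lem:bilimit} is needed.
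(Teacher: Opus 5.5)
Your proof is correct. The first direction is literally the second bilimit identity. For the converse, you correctly verify condition (2) of Lemma~\ref{lem:bilimit}: you use $y=b$ to get $\sup_i\inf_{j\ge i}X(x_j,b)=1$, and $y=x_k$ with a Cauchy index $i_0$ chosen independently of $k$ to get $\sup_i\inf_{k\ge i}X(b,x_k)=1$. The paper gives no proof of its own, since it imports the lemma from Zhang's notes, and your reduction to Lemma~\ref{lem:bilimit} is the natural argument for this fact.
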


\begin{definition}
A \([0,1]\)-enriched category \(X\) is \emph{Cauchy complete} if every Cauchy net in \(X\) has a
bilimit. It is \emph{Smyth complete} if every forward Cauchy net in \(X\) has a bilimit, which is
then unique up to isomorphism and is the limit of the net in the open ball topology of the
symmetrization.
\end{definition}
\begin{prop}[\cite{Zhang2024IntroductoryNO}, Proposition 11.2]\label{forward Cauchy=Cauchy}
    If a \([0,1]\)-enriched category $X$ is Smyth complete, then it is separated and all of its forward Cauchy nets are Cauchy.
\end{prop}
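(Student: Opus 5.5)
The statement has two parts. I would get separation from the uniqueness clause built into the definition of Smyth completeness. I would get the implication ``forward Cauchy $\Rightarrow$ Cauchy'' by applying the two bilimit equations at the bilimit $a$ itself and then using transitivity of $X$.

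\textbf{Forward Cauchy nets are Cauchy.} Let $\{x_i\}_{i\in D}$ be forward Cauchy and let $a$ be a bilimit, which exists by Smyth completeness. I would not use Lemma~\ref{lem:bilimit}, since it presupposes that the net is already Cauchy. Instead I would use the definition of bilimit directly.
\begin{itemize}
\item Putting $x=a$ in the first bilimit equation gives $\sup_i\inf_{j\ge i}X(a,x_j)=X(a,a)=1$.
\item Putting $x=a$ in the second equation gives $\sup_i\inf_{j\ge i}X(x_j,a)=1$.
\end{itemize}
Now fix $r<1$. Both suprema equal $1$ and the sets involved are upward directed in $i$, so there is a single $i_0$ such that $X(a,x_k)>r$ and $X(x_j,a)>r$ for all $j,k\ge i_0$. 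Transitivity of $X$ then gives
\[
X(x_j,x_k)\ \ge\ X(a,x_k)\,\&\,X(x_j,a)\ \ge\ r\,\&\,r
\qquad (j,k\ge i_0).
\]
Hence $\sup_i\inf_{j,k\ge i}X(x_j,x_k)\ge\sup_{r<1}r\,\&\,r$.

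The remaining point is that $\sup_{r<1} r\,\&\,r=1$. This holds because $\&$ is left-continuous (in particular, continuous in the setting of this paper): $\sup_{r<1}(r\,\&\,r)=1\,\&\,1=1$. So the net is Cauchy. This is the only place where a property of the t-norm is used, and I would flag it explicitly.

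\textbf{Separation.} Suppose $x,y$ are isomorphic, that is, $X(x,y)=X(y,x)=1$. Consider the constant net at $x$; it is trivially forward Cauchy.
\begin{itemize}
\item $x$ is a bilimit of this net.
\item $y$ is also a bilimit, because $X(z,y)\ge X(x,y)\,\&\,X(z,x)=X(z,x)$ and symmetrically $X(z,x)\ge X(z,y)$, so $X(z,y)=X(z,x)$; likewise $X(y,z)=X(x,z)$ for every $z$.
\item The constant net therefore converges to both $x$ and $y$ in the open ball topology of the symmetrization.
\end{itemize}
The definition of Smyth completeness stated above requires the bilimit to be \emph{the} limit of the net in that topology, so I read that clause as demanding uniqueness of the limit. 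Under that reading the constant net forces $x=y$, and $X$ is separated.

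\textbf{Main obstacle.} The delicate step is this separation argument. It relies on reading the uniqueness clause in the definition as genuine uniqueness of the limit, not merely uniqueness up to isomorphism. Under a weaker reading, separation would not follow from Smyth completeness, and I would have to make that dependence on the definition explicit.
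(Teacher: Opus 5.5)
Your proof is correct and is essentially the standard argument behind the cited result (the paper gives no proof of its own, only the reference to Zhang's notes): separation comes from the constant net at $x$ converging in the symmetrized open ball topology to both $x$ and any isomorphic $y$, and Cauchyness comes from $X(x_j,x_k)\ge X(a,x_k)\,\&\,X(x_j,a)$ together with $\sup_{r<1}r\,\&\,r=1$. Your caveat is also well founded: separation genuinely requires the limit in the symmetrized topology to be unique, since under the paper's paraphrase (bilimit unique only up to isomorphism) the two-point category with $X(x,y)=X(y,x)=1$ would be Smyth complete but not separated.
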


\begin{thm}[\cite{Zhang2024IntroductoryNO}, Theorem 8.14]\label{thm:cauchy-complete-sequences}
A \([0,1]\)-enriched category \(X\) is Cauchy complete if and only if
each of its Cauchy sequences converges uniquely in the open ball topology of its
symmetrization.
\end{thm}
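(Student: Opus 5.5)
For the forward direction I will use Lemma~\ref{lem:bilimit}; for the converse I will reduce an arbitrary Cauchy net to a Cauchy sequence extracted from it and compare limits with transitivity. I read ``converges uniquely'' as unique up to isomorphism. This is the only sense available in a non-separated category, since isomorphic points have the same open-ball neighbourhoods.

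\emph{Necessity.} Assume $X$ is Cauchy complete and let $\{x_n\}$ be a Cauchy sequence, regarded as a net indexed by $\mathbb{N}$. By completeness it has a bilimit $a$. By Lemma~\ref{lem:bilimit}, (1)$\Rightarrow$(3), it converges to $a$ in the open ball topology of the symmetrization. Now suppose it also converges to $b$ in that topology. Lemma~\ref{lem:bilimit}, (3)$\Rightarrow$(1), shows that $b$ is also a bilimit. Since bilimits are unique up to isomorphism, $b\cong a$.

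\emph{Sufficiency.} Let $\{x_i\}_{i\in D}$ be a Cauchy net and choose $r_n\uparrow 1$ with $r_n<1$. By the Cauchy condition and directedness of $D$, I choose inductively $i_1\le i_2\le\cdots$ in $D$ such that
\[
\inf_{j,k\ge i_n}X(x_j,x_k)>r_n .
\]
Put $y_n:=x_{i_n}$. For $m,m'\ge n$ we have $i_m,i_{m'}\ge i_n$, so $X(y_m,y_{m'})>r_n$. Hence $\{y_n\}$ is a Cauchy sequence, and by hypothesis it converges in the symmetrized topology to some $a$. Lemma~\ref{lem:bilimit} gives
\[
\sup_n\inf_{m\ge n}X(a,y_m)=1=\sup_n\inf_{m\ge n}X(y_m,a).
\]
It remains to transfer these two equalities from the sequence to the original net and then apply Lemma~\ref{lem:bilimit}, (2)$\Rightarrow$(1).

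\emph{Transfer step (main obstacle).} Fix $s<1$ and pick $N$ with $X(a,y_m)>s$ for all $m\ge N$. Let $n\ge N$ and $j\ge i_n$. Both $j$ and $i_n$ lie beyond $i_n$, so $X(y_n,x_j)>r_n$. Transitivity then gives
\[
X(a,x_j)\ \ge\ X(y_n,x_j)\,\&\,X(a,y_n)\ \ge\ r_n\,\&\, s .
\]
Therefore $\sup_i\inf_{j\ge i}X(a,x_j)\ge r_n\& s$ for all large $n$ and all $s<1$. Left-continuity of $\&$ together with $1\&1=1$ gives $\sup_{r,s<1} r\& s=1$, so this supremum equals $1$. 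The dual estimate,
\[
X(x_j,a)\ \ge\ X(y_n,a)\,\&\,X(x_j,y_n),
\]
handles the other side in the same way. By Lemma~\ref{lem:bilimit}, $a$ is a bilimit of $\{x_i\}_{i\in D}$, so $X$ is Cauchy complete.

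The delicate points are twofold. First, $i_n$ must be chosen monotone in $D$, so that every $j\ge i_n$ is simultaneously comparable with $y_n$ through the Cauchy bound. Second, passing from $r_n\& s$ to $1$ uses only left-continuity of the t-norm and not full continuity.
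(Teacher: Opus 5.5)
Your proof is correct. The paper contains no proof of this statement to compare against: it quotes the result from Zhang's notes and uses it only as a black box in the sufficiency part of Theorem~\ref{thm:cauchy-completeness}.

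Your argument is the standard one, and it is self-contained apart from Lemma~\ref{lem:bilimit}:
\begin{enumerate}[label=(\roman*)]
\item extract $y_n=x_{i_n}$ along an increasing chain of indices, so that $\{y_n\}$ is a Cauchy sequence;
\item take its symmetrized limit $a$;
\item push the two one-sided conditions back to the whole net by transitivity.
\end{enumerate}
Reading ``uniquely'' as ``unique up to isomorphism'' is also right. Isomorphic points satisfy $S(a,\cdot)=S(b,\cdot)$, so the symmetrized open ball topology cannot distinguish them, and literal uniqueness would fail in any non-separated category.

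Two small presentational points.
\begin{itemize}
\item Monotonicity of $i_1\le i_2\le\cdots$ is what makes $\{y_n\}$ Cauchy, and you use it correctly there. The bound $X(y_n,x_j)>r_n$ for $j\ge i_n$, however, holds for any $i_n$ satisfying $\inf_{j,k\ge i_n}X(x_j,x_k)>r_n$. So your closing remark ties monotonicity to the wrong step.
\item $N$ depends on $s$, so the last limit should be stated as follows. For fixed $s<1$, left-continuity along $r_n\uparrow 1$ gives $\sup_{n\ge N(s)}(r_n\mathbin{\&}s)=1\mathbin{\&}s=s$; then let $s\to 1$. This is not literally $\sup_{r,s<1}r\mathbin{\&}s$, although the conclusion is the same.
\end{itemize}
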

\begin{cor}[\cite{Zhang2024IntroductoryNO}, Corollary 11.6]\label{Cauchy complete and Smyth complete}
    For each \([0,1]\)-enriched category $X$, the following are equivalent:
\begin{enumerate}[label={\textup{(\arabic*)}}]
    \item $X$ is Cauchy complete.
    \item The symmetrization of $X$ is Smyth complete.
    \item The symmetrization of $X$ is Cauchy complete.
\end{enumerate}
\end{cor}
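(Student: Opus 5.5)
The plan is to reduce everything to two observations: the symmetrization $X^s:=(X,S(\alpha))$ has exactly the same Cauchy nets as $X$, and in a symmetric category forward Cauchy nets are the same as Cauchy nets. Bilimits can then be compared through the open ball topology of the symmetrization, using Lemma~\ref{lem:bilimit}.

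\emph{Step 1 (Cauchy nets coincide).} Let $\{x_i\}_{i\in D}$ be a net. For each $i$, the index set $\{(j,k): j,k\ge i\}$ is invariant under swapping $j$ and $k$. Hence
\[
\inf_{j,k\ge i}X^s(x_j,x_k)=\inf_{j,k\ge i}X(x_j,x_k)\wedge\inf_{j,k\ge i}X(x_k,x_j)=\inf_{j,k\ge i}X(x_j,x_k).
\]
Taking $\sup_i$ shows that $\{x_i\}$ is Cauchy in $X$ if and only if it is Cauchy in $X^s$.

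\emph{Step 2 (forward Cauchy equals Cauchy in $X^s$).} Since $X^s$ is symmetric, the infimum of $X^s(x_j,x_k)$ over pairs with $k\ge j\ge i$ equals the infimum over all pairs $j,k\ge i$. Indeed, any pair $j,k\ge i$ has a common upper bound $l$ in the directed set $D$. Transitivity then gives
\[
X^s(x_j,x_k)\ge X^s(x_l,x_k)\,\&\,X^s(x_j,x_l).
\]
So if the forward Cauchy quantity $\inf_{k\ge j\ge i}X^s(x_j,x_k)$ is close to $1$, say at least $1-\varepsilon$, then $X^s(x_j,x_k)\ge(1-\varepsilon)\,\&\,(1-\varepsilon)$ for all $j,k\ge i$. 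Continuity of $\&$ at $(1,1)$ makes this close to $1$ as $\varepsilon\to0$. (This is the only place where a little care is needed, and it is also where continuity of $\&$ at $(1,1)$, valid for left-continuous $\&$ at the top, is used.) The converse implication is trivial. So the forward Cauchy nets of $X^s$ are exactly its Cauchy nets.

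\emph{Step 3 (bilimits coincide).} Let $\{x_i\}$ be Cauchy and let $a\in X$. By Lemma~\ref{lem:bilimit}(3), $a$ is a bilimit of $\{x_i\}$ in $X$ if and only if $x_i\to a$ in the open ball topology of $X^s$. Applying the same lemma to $X^s$, whose symmetrization is $X^s$ itself because $\min\{S(\alpha)(x,y),S(\alpha)(y,x)\}=S(\alpha)(x,y)$, $a$ is a bilimit in $X^s$ if and only if $x_i\to a$ in that same topology.

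\emph{Conclusion.} Steps 1 and 3 give (1)$\Leftrightarrow$(3). Steps 2 and 3 give (2)$\Leftrightarrow$(3), since Smyth completeness of $X^s$ asks for bilimits of its forward Cauchy nets, which by Step 2 are just its Cauchy nets. I expect no genuine obstacle beyond the directedness argument in Step 2.
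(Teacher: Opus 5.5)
The paper does not prove this corollary; it quotes it from Zhang's notes, so there is no internal argument to compare yours with. Your proof is correct and self-contained relative to the definitions and Lemma~\ref{lem:bilimit} as the paper states them. All three steps are sound: Step~1 ($X$ and $X^s$ have the same Cauchy nets), Step~2 (forward Cauchy nets of the symmetric category $X^s$ are Cauchy), and Step~3 (bilimits of a Cauchy net in $X$ and in $X^s$ are both characterized by convergence in the same topology).

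Two points of precision. First, the opening sentence of Step~2 overstates. When $D$ is not linearly ordered, the infimum over pairs $k\ge j\ge i$ can be strictly larger than the infimum over all pairs $j,k\ge i$, even in a symmetric category. What your upper-bound argument actually gives is $\inf_{j,k\ge i}X^s(x_j,x_k)\ge t_i\,\&\,t_i$ with $t_i=\inf_{k\ge j\ge i}X^s(x_j,x_k)$. The only property of $\&$ needed to conclude is $\sup_{a<1}(a\,\&\,a)=1$, which follows from left-continuity. Second, (2)$\Leftrightarrow$(3) follows from Step~2 alone, since both conditions ask for bilimits in the same category $X^s$; Step~3 is needed only for (1)$\Leftrightarrow$(3).

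Finally, your argument uses both completeness notions literally as the paper defines them, requiring existence of bilimits but not uniqueness. This is essential. Suppose uniqueness of the limit were built into Smyth completeness but not into Cauchy completeness, as the ``separated'' clause of Proposition~\ref{forward Cauchy=Cauchy} presupposes. Then (1)$\Rightarrow$(2) would fail for non-separated $X$: a two-point set with $X(x,y)=X(y,x)=1$ is Cauchy complete, but its symmetrization is itself and is not separated.
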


\begin{thm}[\cite{kouhui}, Theorem 4.3]\label{E^1_{[-M,M]} is Yoneda complete}
Let $\&$ be a  continuous t-norm. Then $(\mathbb{E}^1_{[-M,M]},P)$ is  Yoneda complete.    
\end{thm}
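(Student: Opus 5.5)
The plan is to build the Yoneda limit of a forward Cauchy net explicitly, one ``side'' at a time, using the representation $u=(u^\ell,u^r)$ of Proposition~\ref{represent of fuzzy number}. The guiding observation is the form of $P(u,v)=\inf_x[(v^\ell(x)\to u^\ell(x))\wedge(u^r(x)\to v^r(x))]$.
\begin{itemize}
\item On the right parts, $P$ is the usual covariant hom $\inf_x f(x)\to g(x)$ of the $[0,1]$-enriched category $[0,1]^{\mathbb R}$.
\item On the left parts, it is the same hom with the roles of the two functions reversed.
\end{itemize}
In $[0,1]^{\mathbb R}$ with that hom, the Yoneda limit of a forward Cauchy net $\{f_i\}$ is the pointwise liminf $\sup_i\inf_{j\ge i}f_j$. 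This holds for a continuous t-norm because $\to$ turns the relevant sups and infs into infs and sups (Proposition~\ref{prop:residuum-properties}(7),(8)), together with continuity of $\&$. So for a forward Cauchy net $\{u_i\}$ in $\mathbb{E}^1_{[-M,M]}$ I would set
\[
f(x)=\inf_{i}\sup_{j\ge i}u_j^\ell(x),\qquad g(x)=\sup_i\inf_{j\ge i}u_j^r(x).
\]
Then I would take $b^\ell$ to be the upper semicontinuous (right‑continuous) regularization of the non-decreasing function $f$, and $b^r$ the upper semicontinuous (left‑continuous) regularization of the non-increasing function $g$.

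\textbf{Step 1: $(b^\ell,b^r)$ is a fuzzy number in $\mathbb{E}^1_{[-M,M]}$.} I would check conditions (1)–(3) of Proposition~\ref{represent of fuzzy number}.
\begin{itemize}
\item Monotonicity survives liminf/limsup and regularization.
\item Vanishing points exist because every $u_j^\ell$ is $0$ on $(-\infty,-M)$ and every $u_j^r$ is $0$ on $(M,\infty)$. The same bounds give $[b]_0\subseteq[-M,M]$.
\item The real content is condition (3), $b^\ell\vee b^r=1$.
\end{itemize}
For (3), suppose $f(x)<1$ at some $x$. Then for a tail of indices we have $u_j^\ell(x)<1$ frequently, hence $u_j^r(x)=1$ for those $j$. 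Forward Cauchyness gives, for $k\ge j$ far out, $u_j^r(x)\to u_k^r(x)$ close to $1$, that is $1\to u_k^r(x)=u_k^r(x)$ close to $1$. Hence $g(x)=1$. The regularizations only increase values, so (3) persists. This is where the bound $[-M,M]$ and continuity of the t-norm (so that ``$\to$ close to $1$'' controls values uniformly) enter.

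\textbf{Step 2: Yoneda limit equation.} For arbitrary $v$ I would show $P(b,v)=\sup_i\inf_{j\ge i}P(u_j,v)$.
\begin{itemize}
\item The inequality ``$\ge$'' follows by pushing $\inf_{j\ge i}$ inside the infimum over $x$. One then uses $(\sup_j a_j)\to c=\inf_j(a_j\to c)$ and $c\to\inf_j a_j=\inf_j(c\to a_j)$, plus the fact that the regularization does not change the infimum over $x$, since $v^\ell,v^r$ are themselves upper semicontinuous and monotone.
\item The inequality ``$\le$'' is the standard enriched argument. Fix $i$ with $\inf_{k\ge j\ge i}P(u_j,u_k)\ge r$. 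Transitivity gives $P(u_j,v)\ \&\ r\le \dots$, and one shows $\inf_{j\ge i}P(u_j,v)\,\&\,r\le P(b,v)$. Letting $r\to1$ and using continuity of $\&$ finishes it.
\end{itemize}

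\textbf{Main obstacle.} I expect the delicate point to be the interaction of the regularization with both condition (3) and the ``$\le$'' direction. Passing from $f,g$ to their upper semicontinuous envelopes changes values at countably many jump points, and one must check that $P(b,v)$ is computed by an infimum insensitive to these changes. My first attempt would be to prove the identity $\inf_x(\bar f(x)\to \phi(x))=\inf_x(f(x)\to \phi(x))$ for upper semicontinuous monotone $\phi$. This would use one-sided limits and continuity of $\to$ off the diagonal, treating separately the points where $\phi$ jumps.
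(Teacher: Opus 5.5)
The paper only cites this result from Kou and Xie, so I am judging your plan on its own. Your candidate $b=(\bar f,\bar g)$ is the right one, and Step~1 is sound. The ``$\ge$'' half is also sound. There the regularization matters only on the right, where $\bar g$ sits in the antecedent and your insensitivity claim is true: by Proposition~\ref{prop:residuum-properties}(8) and left-continuity of $v^r$, $\bar g(x)\to v^r(x)=\inf_{y<x}(\bar g(x)\to v^r(y))\ge\inf_{y<x}(g(y)\to v^r(y))$.

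The gap is the ``$\le$'' half. The inequality you write, $\inf_{j\ge i}P(u_j,v)\mathbin{\&}r\le P(b,v)$, gives after $r\to1$ only $\sup_i\inf_{j\ge i}P(u_j,v)\le P(b,v)$, which is the ``$\ge$'' direction again. The standard argument runs through $P(b,v)\mathbin{\&}P(u_j,b)\le P(u_j,v)$. So what has to be proved is that the regularized $b$ is still an approximate upper bound of the tail: $P(u_j,b)\ge r_i:=\inf_{k\ge j'\ge i}P(u_{j'},u_k)$ for all $j\ge i$. Your plan never establishes this, and it is exactly where forward Cauchyness must be used.

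Your proposed tool for the ``main obstacle'' cannot supply this bound. On the left component, $b^\ell$ sits in the \emph{consequent} of $v^\ell(x)\to b^\ell(x)$. There the claim that regularization leaves the infimum unchanged because $v^\ell$ is usc and monotone is false. Take $f=\chi_{(0,\infty)}$, so $\bar f=\chi_{[0,\infty)}$, and $v^\ell=\chi_{[0,\infty)}$ (the left part of $\chi_{[0,1]}$). Then $\inf_x(v^\ell\to f)=0$ but $\inf_x(v^\ell\to\bar f)=1$, for every t-norm. What is true is a bound coming from the net itself:
\begin{enumerate}[label=(\alph*)]
\item Since $r_i\le u_k^\ell(x)\to u_j^\ell(x)$ for $k\ge j\ge i$, left-continuity of $\&$ gives $f\mathbin{\&}r_i\le\bigl(\sup_{k\ge j}u_k^\ell\bigr)\mathbin{\&}r_i=\sup_{k\ge j}(u_k^\ell\mathbin{\&}r_i)\le u_j^\ell$.
\item For $y>x$ we have $\bar f(x)\le f(y)$, so $\bar f(x)\mathbin{\&}r_i\le u_j^\ell(y)$.
\item Right-continuity of $u_j^\ell$ then gives $\bar f(x)\mathbin{\&}r_i\le u_j^\ell(x)$. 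Taking $\inf_{j\ge i}$ and then $\sup_i$ even shows $\bar f=f$, so the left part needs no regularization at all.
\item On the right, $u_j^r\mathbin{\&}r_i\le\inf_{k\ge j}u_k^r\le g\le\bar g$.
\end{enumerate}
Hence $P(u_j,b)\ge r_i$, so $P(b,v)\mathbin{\&}r_i\le\inf_{j\ge i}P(u_j,v)$, and taking $\sup_i$ with left-continuity gives ``$\le$''.

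A smaller point: condition (3) needs none of the tools you cite. If $f(x)<1$, then $u_j^\ell(x)<1$ on a whole tail, so $u_j^r(x)=1$ there and $g(x)=1$ outright. The bound $[-M,M]$ is used only for the vanishing points and the support.
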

Kou and Xie also constructed an example showing that the full space \((\mathbb{E}^1,P)\) fails to be Yoneda complete under the {\L}ukasiewicz t-norm. Since Yoneda completeness implies Cauchy completeness~\cite{Zhang2024IntroductoryNO}, the failure of Yoneda completeness does not by itself preclude Cauchy completeness. The following example resolves  this question: it exhibits a Cauchy net in \((\mathbb{E}^1,P_{\Luk})\) with no bilimit, thereby proving that \(\mathbb{E}^1\) is not  Cauchy complete under the {\L}ukasiewicz t-norm.
\begin{exmp}\label{ex:cauchy-not-complete}
  Define 
  \[
        u_n(x)=\begin{cases}
            e^x, & -n\le x\le 0;\\
            0, & \text{otherwise},
        \end{cases}
 \]
    where \(n\in\mathbb{Z}^+\). Each $u_n\in\mathbb{E}^1$. For $m\ge n$ we have $u_m-u_n=e^x\chi_{[-m,-n)}$, so $\|u_m-u_n\|_\infty=e^{-n}$, and by Proposition~\ref{Lukasiewicz t-norm in E^1}, $S_{\Luk}(u_m,u_n)=1-e^{-n}$. For general $m,k$ one has $S_{\Luk}(u_m,u_k)=1-e^{-\min(m,k)}$, so $\inf_{m,k\ge n}S_{\Luk}(u_m,u_k)=1-e^{-n}\to1$. Hence
    \[
    \sup_n\inf_{m,k\ge n}S_{\Luk}(u_m,u_k)=1,
    \]
   so $\{u_n\}$ is Cauchy with respect to the symmetrization $S_{\Luk}$ (i.e.\ $\sup_n\inf_{m,k\ge n}S_{\Luk}(u_m,u_k)=1$), hence a Cauchy (biCauchy) net in $(\mathbb{E}^1,P_{\Luk})$.
 If $\{u_n\}$ had a bilimit $v\in\mathbb{E}^1$, then by Lemma~\ref{lem:bilimit} and Proposition~\ref{Lukasiewicz t-norm in E^1} we would have $\|u_n-v\|_\infty\to0$, so that $v$ must coincide with the pointwise limit $u_\infty(x)=e^x\chi_{(-\infty,0]}(x)$, whose support $(-\infty,0]$ is not compact; hence $u_\infty\notin\mathbb{E}^1$, a contradiction. Thus $\{u_n\}$ has no bilimit, and $\mathbb{E}^1$ is not Cauchy complete under the {\L}ukasiewicz t-norm.
\end{exmp}
\begin{lem}\label{lem:no-zero-divisors}
Let $\&$ be a continuous t-norm and let $a>0$. If $a$ lies in an Archimedean component of the ordinal-sum decomposition of $\&$ whose left endpoint is positive, or if $a$ is an idempotent with $a>0$, then
\[
a\mathbin{\&} b=0\ \Longrightarrow\ b=0 .
\]
\end{lem}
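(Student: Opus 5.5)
The plan is to use the ordinal-sum representation of the continuous t-norm $\&$ (\cite[Theorem~2.4.3]{Alsina2006}). It writes $\&$ as a family of pairwise disjoint open intervals $(a_k,b_k)\subseteq[0,1]$. On each square $[a_k,b_k]^2$, $\&$ is a rescaled copy of a continuous Archimedean t-norm (isomorphic to {\L}ukasiewicz or product). Outside the union of these squares, $x\mathbin{\&}y=\min(x,y)$. We argue by contraposition: assume $a>0$ and $b>0$, and show $a\mathbin{\&}b>0$.

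The first step handles the idempotent case. If $a$ is an idempotent, then $a$ lies in no open interval $(a_k,b_k)$. For any $b$, either $b$ and $a$ are not in a common summand square, so $a\mathbin{\&}b=\min(a,b)>0$; or $a$ is an endpoint of the summand containing $b$. If $a=a_k$, then $a\mathbin{\&}b=a_k>0$ when $b\ge a_k$, since $a_k$ is the zero of that rescaled summand and $a_k\mathbin{\&}b=a_k$ there. If $a=b_k$, it acts as the identity on $[a_k,b_k]$, so $a\mathbin{\&}b=b>0$. A cleaner route I would write is the general fact that for an idempotent $e$ and any $y$, $e\mathbin{\&}y=\min(e,y)$ whenever $y\le e$, and $e\mathbin{\&}y\ge e\mathbin{\&}e=e>0$ whenever $y\ge e$. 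The first part follows from the ordinal-sum structure, or directly from $y=e\mathbin{\&}$-absorption via continuity. Either way, $a\mathbin{\&}b\ge\min(a,b)$-type bounds give positivity.

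The second step handles $a\in(a_k,b_k)$ with $a_k>0$. If $b\notin(a_k,b_k)$, then $a\mathbin{\&}b=\min(a,b)>0$. Note that $b<a_k$ also gives $\min(a,b)=b>0$, and this covers $b\le a_k$ including $b=a_k$ via the endpoint case. If $b\in(a_k,b_k)$, then $a\mathbin{\&}b\in[a_k,b_k]$ because the summand maps $[a_k,b_k]^2$ into itself. Hence $a\mathbin{\&}b\ge a_k>0$.

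The main obstacle is bookkeeping at the endpoints of the summands: getting exactly the statement ``$\&=\min$ off the diagonal squares'' together with ``each square is closed under $\&$'' from the cited representation theorem. I would state these two facts explicitly as consequences of \cite[Theorem~2.4.3]{Alsina2006} and then do the case analysis above. The hypothesis $a_k>0$ is exactly what rules out the {\L}ukasiewicz-type zero divisors that occur in a summand starting at $0$.
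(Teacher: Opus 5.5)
Your proof is correct and uses essentially the same case split as the paper. For $a$ inside a summand with positive left endpoint, a $b$ in the same summand gives $a\mathbin{\&}b\ge a_k>0$, and otherwise $a\mathbin{\&}b=\min(a,b)$; for idempotent $a$, one gets $a\mathbin{\&}b=\min(a,b)$ when $b\le a$ and $a\mathbin{\&}b\ge a$ when $b>a$. Your endpoint bookkeeping with the ordinal-sum formula already settles the idempotent case. If you keep the garbled continuity remark, state it precisely: for $y\le e$ choose $z$ with $e\mathbin{\&}z=y$ by the intermediate value theorem, so that $e\mathbin{\&}y=e\mathbin{\&}e\mathbin{\&}z=y$.
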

\begin{proof}
If $a$ and $b$ lie in the same component $[e_i,e_{i+1}]$ with $e_i>0$, then $a\mathbin{\&} b\ge e_i>0$. If they lie in different components, then $a\mathbin{\&} b=\min(a,b)>0$ whenever $b>0$. If $a$ is an idempotent with $a>0$, then for $b\le a$ one has $a\mathbin{\&} b=b$, while for $b>a$ one has $a\mathbin{\&} b\ge a>0$. In all cases $b>0$ implies $a\mathbin{\&} b>0$, so $a\mathbin{\&} b=0$ forces $b=0$.
\end{proof}

\begin{thm}\label{thm:cauchy-completeness}
     Let $\&$ be a continuous t-norm. \((\mathbb{E}^1, P) \) is Cauchy complete if and only if $\&$ is not   isomorphic to the {\L}ukasiewicz t-norm on \([0,1]\).
\end{thm}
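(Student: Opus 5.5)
\emph{Necessity.} Suppose $\&$ is isomorphic to the {\L}ukasiewicz t-norm via an order automorphism $\varphi$ of $[0,1]$, that is, $\varphi(a\mathbin{\&}b)=\varphi(a)\mathbin{\&_{\Luk}}\varphi(b)$.

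\begin{enumerate}[label=(\roman*)]
\item The residuum transforms accordingly: $\varphi(a\to b)=\varphi(a)\to_{\Luk}\varphi(b)$.
\item Since $\varphi$ is applied pointwise and commutes with infima, $P_{\&}=\varphi^{-1}\circ P_{\Luk}$ and $S_{\&}=\varphi^{-1}\circ S_{\Luk}$.
\item $\varphi$ is a homeomorphism fixing $1$, so for any net, $\sup_i\inf_{j,k\ge i}S_{\&}(u_j,u_k)=1$ if and only if the same holds for $S_{\Luk}$.
\item Likewise, bilimits correspond to each other by Lemma~\ref{lem:bilimit}(2).
\end{enumerate}

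Hence the sequence of Example~\ref{ex:cauchy-not-complete} is Cauchy in $(\mathbb{E}^1,P_{\&})$ and has no bilimit, so $(\mathbb{E}^1,P)$ is not Cauchy complete.

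\emph{Sufficiency, step 1: a threshold $a_0$.} Assume $\&$ is not isomorphic to $\Luk$. I first show there is $a_0<1$ such that every $a\in[a_0,1]$ satisfies the conclusion of Lemma~\ref{lem:no-zero-divisors}. I use the ordinal-sum decomposition and split into three cases.
\begin{enumerate}[label=(\roman*)]
\item Some Archimedean component $[e,1]$ has $e>0$. Take $a_0\in(e,1)$.
\item Idempotents accumulate at $1$. Take $a_0$ to be a positive idempotent; every $a\ge a_0$ is then either idempotent or lies in a component with left endpoint at least $a_0>0$.
\item The top component is all of $[0,1]$. Then $\&$ is Archimedean. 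Since it is not nilpotent (nilpotent would mean isomorphic to $\Luk$), it is strict, i.e.\ isomorphic to the product, which has no zero divisors. Any $a_0>0$ works.
\end{enumerate}

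\emph{Sufficiency, step 2: eventually constant supports.} By Theorem~\ref{thm:cauchy-complete-sequences} it suffices to treat Cauchy sequences $\{u_n\}$. The first half of the proof of Lemma~\ref{pointwise representation} gives $S(u,v)\le u(x)\leftrightarrow v(x)$ for every $x$. Choose $N$ with $S(u_m,u_n)\ge a_0$ for all $m,n\ge N$.
\begin{itemize}
\item If $u_m(x)=0$, then $a_0\le u_n(x)\to 0$.
\item By Proposition~\ref{prop:residuum-properties}(2), this gives $u_n(x)\mathbin{\&}a_0=0$, and the choice of $a_0$ forces $u_n(x)=0$.
\end{itemize}
So all $u_n$ with $n\ge N$ have the same zero set. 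Taking closures of the positivity sets, they share the compact support $[u_N]_0\subseteq[-M,M]$ for some $M$, so the tail lies in $\mathbb{E}^1_{[-M,M]}$.

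\emph{Sufficiency, step 3: existence and uniqueness of the bilimit.} The tail is a Cauchy sequence in the subcategory $(\mathbb{E}^1_{[-M,M]},P)$, which is Yoneda complete by Theorem~\ref{E^1_{[-M,M]} is Yoneda complete}.
\begin{itemize}
\item A Cauchy sequence is forward Cauchy, so it has a Yoneda limit $b\in\mathbb{E}^1_{[-M,M]}$.
\item By Lemma~\ref{lem:yoneda limit is bilimit}, applied inside the subcategory, $b$ is a bilimit there.
\item Condition (2) of Lemma~\ref{lem:bilimit} involves only the values $P(b,u_j)$ and $P(u_j,b)$. It is therefore intrinsic, and $b$ is also a bilimit in $(\mathbb{E}^1,P)$; dropping finitely many terms does not affect this.
\item Uniqueness of the limit in the open ball topology of $S$ follows because $(\mathbb{E}^1,S)$ is separated.
\end{itemize}

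I expect the main obstacle to be the case analysis producing $a_0$. In particular, one must rule out exactly the situation where the top Archimedean component is nilpotent with left endpoint $0$. This is precisely the {\L}ukasiewicz-isomorphic case, and it must be separated cleanly from components with positive left endpoint and from the strict case.
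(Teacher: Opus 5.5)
Your proof is correct and follows essentially the paper's route. For necessity you transport the {\L}ukasiewicz counterexample along $\varphi$. For sufficiency your non-zero-divisor threshold $a_0<1$ is equivalent to the paper's estimate $\sup_{a>0}(a\to 0)<1$; you find it at the top of the ordinal sum rather than the bottom component, and you use the pointwise bound $S(u,v)\le u(x)\leftrightarrow v(x)$ instead of comparing support endpoints. Both arguments then invoke Yoneda completeness of $\mathbb{E}^1_{[-M,M]}$, and you are more explicit than the paper about why the bilimit found in the subcategory is a bilimit in $\mathbb{E}^1$.

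The only slip is in necessity (ii). The correct identity is $P_{\&}(u,v)=\varphi^{-1}\bigl(P_{\Luk}(\varphi\circ u,\varphi\circ v)\bigr)$, so the witness should be $\varphi^{-1}\circ u_n$, as in the paper. The untransported $u_n$ also works, but only because $\varphi\circ u_n$ is again uniformly Cauchy with a pointwise limit whose support is not compact.
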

\begin{proof}
\noindent\textbf{Necessity.}
Suppose that $(\mathbb{E}^{1},P)$ is Cauchy complete. We show that the continuous t-norm $\&$ cannot be isomorphic to the {\L}ukasiewicz t-norm.

Assume, on the contrary, that $\&$ is isomorphic to the {\L}ukasiewicz t-norm, i.e., there exists a strictly increasing bijection $\varphi:[0,1]\to[0,1]$ such that $\varphi(a\,\&\,b)=\max(\varphi(a)+\varphi(b)-1,0)$ for all $a,b\in[0,1]$. By Example \ref{ex:cauchy-not-complete}, $(\mathbb{E}^{1},P_{\Luk})$ is not Cauchy complete.
Now transport the construction via $\varphi$; set $v_n=\varphi^{-1}\circ u_n$, i.e.,
\[
v_{n}(x)=\begin{cases}
\varphi^{-1}(e^{x}), & -n\leq x\leq 0,\\[2pt]
0, & \text{otherwise}.
\end{cases}
\]
For any $j,k$,
\[
P(v_{j},v_{k})=\varphi^{-1}\big(P_{\Luk}(u_{j},u_{k})\big),
\]
since $\varphi$ is an order-isomorphism of $[0,1]$ commuting with arbitrary suprema and infima, and it carries the residuated implication of $\&$ to that of the {\L}ukasiewicz t-norm; concretely $v_n^\ell=\varphi^{-1}\circ u_n^\ell$, $v_n^r=\varphi^{-1}\circ u_n^r$, and $\varphi^{-1}(a)\to\varphi^{-1}(b)=\varphi^{-1}(a\to_{\Luk}b)$ for all $a,b\in[0,1]$.
Because $\varphi^{-1}(1)=1$,
\[
\sup_{i}\inf_{j,k\geq i}P(v_{j},v_{k})
=\varphi^{-1}\!\Big(\sup_{i}\inf_{j,k\geq i}P_{\Luk}(u_{j},u_{k})\Big)
=\varphi^{-1}(1)=1,
\]
so $\{v_{n}\}$ is a Cauchy net in $(\mathbb{E}^{1},P)$. If $\{v_{n}\}$ has a bilimit $v^{*}$ in $(\mathbb{E}^{1},P)$, then $\varphi\circ v^{*}$ would be a bilimit of $\{u_{n}\}$ in $(\mathbb{E}^{1},P_{\Luk})$, a contradiction. Hence $(\mathbb{E}^{1},P)$ is not Cauchy complete, contradicting the hypothesis. Therefore $\&$ is not isomorphic to the {\L}ukasiewicz t-norm.

\medskip
\noindent\textbf{Sufficiency.}
 Suppose \(\&\) is not isomorphic to the {\L}ukasiewicz t-norm. We prove that \((\mathbb{E}^1,P)\) is Cauchy complete. By Theorem~\ref{thm:cauchy-complete-sequences}, it suffices to show that every Cauchy sequence in \((\mathbb{E}^1,P)\) converges.

By the Mostert--Shields ordinal sum decomposition, $\&$ is an ordinal sum of continuous Archimedean t-norms, each isomorphic either to the {\L}ukasiewicz t-norm or to the product t-norm. We prove the following key estimate:
\begin{equation}\label{eq:key}
c:=\sup_{a>0}(a\to 0)<1,
\end{equation}
where $a\to0=a\leftrightarrow0$ since $0\to a=1$. Three cases arise.

\emph{Case 1.} There is no Archimedean component whose left endpoint is $0$. Then every $a>0$ lies in a component with positive left endpoint or is an idempotent, so Lemma~\ref{lem:no-zero-divisors} gives $a\to0=0$; hence $c=0$.

\emph{Case 2.} The bottom component $[0,e_1]$ is isomorphic to the product t-norm. For $a\in(0,e_1]$ the product t-norm has no zero divisors, so $a\to0=0$; for $a>e_1$, Lemma~\ref{lem:no-zero-divisors} again gives $a\to0=0$. Thus $c=0$.

\emph{Case 3.} The bottom component $[0,e_1]$ is isomorphic to the {\L}ukasiewicz t-norm. Let $\phi:[0,e_1]\to[0,1]$ be the isomorphism. For $0<a\le e_1$, we have $a\to0=\phi^{-1}(1-\phi(a))$, so $\sup_{a\in(0,e_1]}(a\to0)=e_1$; for $a>e_1$, Lemma~\ref{lem:no-zero-divisors} gives $a\to0=0$. Thus $c=e_1$. Since $\&$ is not isomorphic to the full {\L}ukasiewicz t-norm on $[0,1]$, we must have $e_1<1$, and therefore $c<1$.

Now let $\{u_{n}\}_{n\in \mathbb{N}}$ be a Cauchy sequence in $(\mathbb{E}^{1},P)$; that is,
\[
\sup_{n\in \mathbb{N}}\inf_{m,k\geq n}P(u_{m},u_{k})=1.
\]
Choose $\varepsilon>0$ with $1-\varepsilon>c$. There exists $n_{0}\in \mathbb{N}$ such that for all $m,k\geq n_{0}$,
\begin{equation}\label{eq:cauchy}
P(u_{m},u_{k})>1-\varepsilon.
\end{equation}
We claim that for all $k\ge n_0$, $[u_{n_0}]_0=[u_k]_0$.  If \([u]_0=[L_u,R_u]\), $u^\ell(x)>0$ for $x>L_u$ and $u^r(x)>0$ for $x<R_u$ (since $L_u=\inf\{x:u(x)>0\}$ and $R_u=\sup\{x:u(x)>0\}$). Set $[u_{n_0}]_0=[L_0,R_0]$ and $[u_k]_0=[L_k,R_k]$. If \(L_k<L_0\), choose \(x\in(L_k,L_0)\). Then
\(u_k^\ell(x)=a>0\) and \(u_{n_0}^\ell(x)=0\), so
\[
P(u_{n_0},u_k)
 \le a\to0
 \le c
 <1-\varepsilon,
\]
a contradiction.

The remaining cases \(L_k>L_0\), \(R_k>R_0\), and \(R_k<R_0\) are handled analogously: choosing \(x\) between the corresponding endpoints, we obtain the same contradiction. Hence $[u_{n_0}]_0=[u_k]_0$ for all $k\ge n_0$. Let this common compact support be $[L,R]$.

Set \(M>\max(|L|,|R|)\). Then every term \(u_n\), \(n\ge n_0\), belongs to
\(\mathbb{E}^{1}_{[-M,M]}\), and the tail
\(\{u_n\}_{n\ge n_0}\) is Cauchy in
\(\mathbb{E}^{1}_{[-M,M]}\). By Theorem~\ref{E^1_{[-M,M]} is Yoneda complete}, \(\mathbb{E}^{1}_{[-M,M]}\) is Yoneda complete.  Since the Cauchy tail \(\{u_n\}_{n\ge n_0}\) is in particular forward Cauchy, it admits a Yoneda limit, which is a bilimit by Lemma~\ref{lem:yoneda limit is bilimit}. Consequently, $(\mathbb{E}^1,P)$ is Cauchy complete.

\end{proof}
Theorem \ref{thm:cauchy-completeness} gives a complete characterization of Cauchy completeness of \((\mathbb{E}^1,P)\). We now turn to the stronger notion of Smyth completeness. By Proposition \ref{forward Cauchy=Cauchy}, every Smyth complete category has the property that all its forward Cauchy nets are Cauchy. The following example shows that under the {\L}ukasiewicz t-norm, this property does not hold for \((\mathbb{E}^1,P)\).
\begin{exmp}\label{ex: not smyth complete}
Define
\[
u_n(x) =
\begin{cases}
	1, & x \in \bigl[0,\,1-\tfrac1n\bigr], n\ge 1; \\
	0, & \text{otherwise}.
\end{cases}
\]
A direct computation gives \(u_n^\ell=\chi_{[0,\infty)}\) and \(u_n^r=\chi_{(-\infty,\,1-1/n]}\). Hence for \(j\le k\) one has \(u_j^\ell=u_k^\ell\) and \(u_j^r\le u_k^r\), so that \(u_k^\ell\to u_j^\ell=1\) and \(u_j^r\to u_k^r=1\) (using \(s\to t=1\iff s\le t\)); thus \(P(u_j,u_k)=1\).
Thus
\[
\sup_n\inf_{k\ge j\ge n}P(u_j,u_k)=1,
\]
i.e.\ \(\{u_n\}_{n\in\mathbb{N}}\) is a forward Cauchy net in \((\mathbb{E}^1,P_{\Luk})\).

However, it is not a Cauchy net: for \(i>j\) and \(x\in\bigl(1-\tfrac1j,\,1-\tfrac1i\bigr]\), we have \(u_i^r(x)=1\) and \(u_j^r(x)=0\), so \(P(u_i,u_j)\le u_i^r(x)\to u_j^r(x)=1\to0=0\), whence \(P(u_i,u_j)=0\) and \(\sup_n\inf_{i,j\ge n}P(u_i,u_j)=0\). By Proposition~\ref{forward Cauchy=Cauchy}, \((\mathbb{E}^1,P_{\Luk})\) is not Smyth complete.
\end{exmp}

Note that the calculation uses only the universal identities 
\(0 \to 1 = 1\) and \(1 \to 0 = 0\), which hold for the residuated implication of any t-norm. Hence the same construction works uniformly.
\begin{thm}
    For any continuous t-norm,  \((\mathbb{E}^1,P)\) is not Smyth complete.
\end{thm}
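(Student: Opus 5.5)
The plan is to reuse the sequence of Example~\ref{ex: not smyth complete} verbatim for an arbitrary continuous t-norm $\&$, and to conclude via Proposition~\ref{forward Cauchy=Cauchy}. Smyth completeness forces every forward Cauchy net to be Cauchy. It therefore suffices to exhibit, for each continuous $\&$, a forward Cauchy sequence in $(\mathbb{E}^1,P)$ that is not Cauchy.

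Let $u_n=\chi_{[0,1-1/n]}$ for $n\ge1$; each $u_n$ is a crisp interval and so lies in $\mathbb{E}^1$. Its parts are $u_n^\ell=\chi_{[0,\infty)}$ and $u_n^r=\chi_{(-\infty,1-1/n]}$, and all values involved lie in $\{0,1\}$. The whole computation of $P$ then uses only the residuum on $\{0,1\}$. Here $s\to t=1$ whenever $s\le t$, by Proposition~\ref{prop:residuum-properties}(1). Moreover $1\to 0=0$, by Proposition~\ref{prop:residuum-properties}(5), since $1\to y=y$. Both facts hold for any left-continuous, hence any continuous, t-norm.

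\textbf{Forward Cauchy.} For $j\le k$ the left parts coincide and $u_j^r\le u_k^r$ pointwise. Every term in the infimum defining $P(u_j,u_k)$ is therefore $1$, so $P(u_j,u_k)=1$ and $\sup_n\inf_{k\ge j\ge n}P(u_j,u_k)=1$.

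\textbf{Not Cauchy.} For $i>j$, pick $x\in(1-1/j,1-1/i]$. Then $u_i^r(x)=1$ and $u_j^r(x)=0$, so $P(u_i,u_j)\le 1\to0=0$. Hence $\inf_{i,j\ge n}P(u_i,u_j)=0$ for every $n$, and the sequence is not Cauchy. By Proposition~\ref{forward Cauchy=Cauchy}, $(\mathbb{E}^1,P)$ is not Smyth complete.

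There is no real obstacle here. The only point needing care is the check that the argument never invokes {\L}ukasiewicz-specific arithmetic, i.e.\ that all comparisons involve only the values $0$ and $1$, where every residuated implication behaves identically.
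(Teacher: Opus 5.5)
Your proposal is correct and follows the paper's proof essentially verbatim. You use the same sequence $u_n=\chi_{[0,1-1/n]}$, the same computations $P(u_j,u_k)=1$ for $j\le k$ and $P(u_i,u_j)=0$ for $i>j$, and the same appeal to Proposition~\ref{forward Cauchy=Cauchy}. Your justification via Proposition~\ref{prop:residuum-properties}(1) and (5) is slightly more complete than the paper's remark, since the forward-Cauchy step also needs $0\to 0=1$ and $1\to 1=1$, not only $0\to1=1$ and $1\to0=0$.
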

\begin{proof}
Let \(\{u_n\}_{n\in\mathbb{N}}\) be the net constructed in Example~\ref{ex: not smyth complete}. The argument given there uses only the universal identities \(0\to 1=1\) and \(1\to 0=0\), which hold for the residuated implication of any continuous t-norm. Hence, verbatim, \(\{u_n\}_{n\in\mathbb{N}}\) is a forward Cauchy net but not a Cauchy net in \((\mathbb{E}^1,P)\). By Proposition~\ref{forward Cauchy=Cauchy}, \((\mathbb{E}^1,P)\) is not Smyth complete.
\end{proof}

\begin{cor}
       Let $\&$ be a continuous t-norm. \((\mathbb{E}^1,S) \) is Smyth complete if and only if $\&$ is not isomorphic to the {\L}ukasiewicz t-norm on \([0,1]\).

\end{cor}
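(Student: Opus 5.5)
The corollary follows by combining Theorem~\ref{thm:cauchy-completeness} with Corollary~\ref{Cauchy complete and Smyth complete}. The first step is to record that \((\mathbb{E}^1,S)\) is, by definition, the symmetrization of the \([0,1]\)-enriched category \((\mathbb{E}^1,P)\). Here \(S(u,v)=P(u,v)\wedge P(v,u)\), which is exactly the construction \(S(\alpha)\) applied to \(\alpha=P\). The underlying t-norm \(\&\) is the same continuous t-norm in both structures, so no change of t-norm occurs when passing from \(P\) to \(S\).

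The second step applies Corollary~\ref{Cauchy complete and Smyth complete} with \(X=(\mathbb{E}^1,P)\). The equivalence of items (1) and (2) there says that \((\mathbb{E}^1,P)\) is Cauchy complete if and only if its symmetrization \((\mathbb{E}^1,S)\) is Smyth complete. Theorem~\ref{thm:cauchy-completeness} states that \((\mathbb{E}^1,P)\) is Cauchy complete if and only if \(\&\) is not isomorphic to the {\L}ukasiewicz t-norm. Chaining these two equivalences gives the statement.

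As a consistency check, Proposition~\ref{forward Cauchy=Cauchy} requires a Smyth complete category to be separated, and \((\mathbb{E}^1,S)\) is separated by the proposition proved in Section~2. So there is no hidden obstruction coming from separatedness.

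I expect no real obstacle, since the work was done in Theorem~\ref{thm:cauchy-completeness}. The only point needing care is to confirm that Corollary~\ref{Cauchy complete and Smyth complete} is being applied to the enriched category \((\mathbb{E}^1,P)\) itself, not to \((\mathbb{E}^1,S)\). Its symmetrization is then literally \((\mathbb{E}^1,S)\), as noted in the definition of \(P\) and \(S\).

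For the {\L}ukasiewicz direction one could also argue directly. Transport Example~\ref{ex:cauchy-not-complete} along the isomorphism \(\varphi\), exactly as in the necessity part of Theorem~\ref{thm:cauchy-completeness}. This gives a Cauchy, hence forward Cauchy, sequence in \((\mathbb{E}^1,S)\) with no bilimit, so \((\mathbb{E}^1,S)\) is not Smyth complete. The corollary-based argument already covers this case, however.
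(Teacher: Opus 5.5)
Your proposal is correct and is exactly the paper's argument: apply Corollary~\ref{Cauchy complete and Smyth complete} (items (1)$\Leftrightarrow$(2)) to $X=(\mathbb{E}^1,P)$, whose symmetrization is $(\mathbb{E}^1,S)$, and chain it with Theorem~\ref{thm:cauchy-completeness}. Your separatedness check is a worthwhile addition, since Smyth completeness forces separatedness and $(\mathbb{E}^1,S)$ is indeed separated.
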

\begin{proof}
By Theorem \ref{thm:cauchy-completeness} and Corollary \ref{Cauchy complete and Smyth complete}.    
\end{proof}

\section{Conclusion}
In this paper, we have carried out a systematic investigation into the completeness properties of the fuzzy order \(P(u,v)\) on the fuzzy number space \(\mathbb{E}^1\), within the framework of \([0,1]\)-enriched categories. Introduced recently by Kou and Xie, this fuzzy order employs the residuated implication of a continuous t-norm to compare fuzzy numbers in a logically grounded manner, thereby unifying and generalizing the natural orders on both the real line and the space of interval numbers.

First, we studied \(S\)-convergence under the open ball topology induced by the symmetrized fuzzy order \(S\), and obtained a complete characterization for three fundamental continuous t-norms. Under the {\L}ukasiewicz t-norm, \(S\)-convergence coincides exactly with uniform convergence in the supremum metric \(d_\infty(u,v)=\|u-v\|_\infty\); under the product and G\"odel (minimum) t-norms, equivalent pointwise characterizations are also obtained. These results establish a chain of inclusions among the three induced open ball topologies:
\[
\tau_{\Luk}\ \subseteq\ \tau_{*}\ \subseteq\ \tau_{\wedge}.
\]

Second, a sharp dichotomy concerning completeness is established. Regardless of the choice of continuous t-norm, \((\mathbb{E}^1,P)\) fails to be Smyth complete; moreover, \((\mathbb{E}^1,P)\) is Cauchy complete if and only if the continuous t-norm is not isomorphic to the {\L}ukasiewicz t-norm. Consequently, the symmetrized fuzzy order \((\mathbb{E}^1,S)\) is Smyth complete if and only if the continuous t-norm is not isomorphic to the {\L}ukasiewicz t-norm. In other words, although the {\L}ukasiewicz t-norm is the most natural choice from a metric perspective, it is the unique source of incompleteness: under any other continuous t-norm, \((\mathbb{E}^1,P)\) is Cauchy complete and \((\mathbb{E}^1,S)\) is Smyth complete.

Together with the Yoneda completeness results of Kou and Xie for the uniformly bounded fuzzy number space \((\mathbb{E}^1_{[-M,M]},P)\), these findings complete the picture of completeness properties for fuzzy orders on fuzzy number spaces, and settle the open problem left in the literature concerning the completeness of the full space \((\mathbb{E}^1,P)\).



\bibliographystyle{abbrv}

\end{document}